\documentclass[10pt]{article}
\usepackage[utf8]{inputenc}
\usepackage[T1]{fontenc}
\usepackage{amsmath}
\usepackage{amsfonts}
\usepackage{amsfonts}

\usepackage{xcolor}
\usepackage{amssymb}
\usepackage[version=4]{mhchem}
\usepackage{stmaryrd}
\usepackage{hyperref}
\hypersetup{colorlinks=true, linkcolor=blue, filecolor=magenta, urlcolor=cyan,}
\usepackage{bbold}

\usepackage{amsthm}

\newtheorem{dfn}{Definition}[section]
\newtheorem{lem}[dfn]{Lemma}
\newtheorem{thm}[dfn]{Theorem}
\newtheorem{cor}[dfn]{Corollary}

\newtheorem{prop}[dfn]{Proposition}

\newtheorem{remark}[dfn]{Remark}

\newtheorem{assumption}{Assumption}
\newcommand{\R}{\mathbb{R}}

\title{Third-Order Dynamical Systems for Generalized Inverse Mixed Variational Inequality Problems}

\author{Tran Van Nam ${ }^{1}$
\\
Email: namtv@hcmute.edu.vn}
\date{}

\let\svthefootnote\thefootnote
\newcommand\blfootnotetext[1]{%
  \let\thefootnote\relax\footnote{#1}%
  \addtocounter{footnote}{-1}%
  \let\thefootnote\svthefootnote%
}

\let\svfootnotetext\footnotetext
\renewcommand\footnotetext[2][?]{%
  \if\relax#1\relax%
    \ifnum\value{footnote}=0\blfootnotetext{#2}\else\svfootnotetext{#2}\fi%
  \else%
    \if?#1\ifnum\value{footnote}=0\blfootnotetext{#2}\else\svfootnotetext{#2}\fi%
    \else\svfootnotetext[#1]{#2}\fi%
  \fi
}

\DeclareUnicodeCharacter{22C5}{\ifmmode\cdot\else{$\cdot$}\fi}

\begin{document}
\maketitle

\begin{abstract}
In this paper, we propose and analyze a third-order dynamical system for solving a generalized inverse mixed variational inequality problem in a Hilbert space $H$. We establish the existence and uniqueness of the trajectories generated by the system under suitable continuity assumptions, and prove their exponential convergence to the unique solution under strong monotonicity and Lipschitz continuity conditions. Furthermore, we derive an explicit discretization of the proposed dynamical system, leading to a forward–backward algorithm with double inertial effects. We then establish the linear convergence of the generated iterates to the unique solution.
\end{abstract}

{\bf Keywords} Monotonicity,  Dynamical system, Generalized inverse mixed variational inequality, Exponential convergence, Linear convergence. 

{\bf Mathematics Subject Classification} 47J20, 49J40, 90C30, 90C52.

\section{Introduction}
\subsection{Background on Inverse Variational Inequalities and Dynamical Systems}
Let $H$ be a real Hilbert space equipped with the inner product $\langle \cdot, \cdot \rangle$ and the associated norm $\|\cdot\|$. Let $\Omega  \subset H$ be a nonempty closed convex set. Given a continuous mapping $\psi: H \to H$, the classical \textbf{variational inequality problem (VIP)} consists in determining $w^* \in \Omega$ such that
\begin{equation*}\label{pt1}
	\langle \psi(w^*), w - w^* \rangle \ge 0, \quad \forall w \in \Omega.
\end{equation*}
Variational inequalities provide a unifying mathematical framework for a wide spectrum of models arising in nonlinear analysis, economics, and engineering sciences \cite{10, 35}. Their theoretical and algorithmic developments have attracted considerable attention, particularly through projection-based iterative schemes \cite{5, 7, 34}.

In various practical applications, including transportation networks and market equilibrium models, the operator $\psi$ may not be directly accessible. Instead, the model is naturally formulated via an inverse mapping, leading to the \textbf{inverse variational inequality problem (IVIP)}. Let $F : H \to H$ be a continuous single-valued operator. The IVIP associated with $F$ and $\Omega$ aims to find $w^* \in H$ such that
\begin{equation*}\label{pt4}
	F(w^*) \in \Omega \quad \text{and} \quad \langle v - F(w^*), w^* \rangle \ge 0, \quad \forall v \in \Omega. 
\end{equation*}
Recent years have witnessed growing interest in inverse variational models \cite{2,11,17,18}. Nevertheless, many contemporary applications demand additional flexibility to incorporate nonsmooth terms or regularization effects. This requirement motivates the study of mixed-type formulations.

In this work, we consider a more general model, referred to as the \textbf{generalized inverse mixed variational inequality (GIMVI)}. The problem consists in finding $w^* \in H$ such that $F(w^*) \in \Omega$ and
\begin{equation}\label{gimvip}
	\langle g(w^*),\, v - F(w^*) \rangle
	+ h(v) - h\big(F(w^*)\big) \ge 0,
	\quad \forall v \in \Omega, 
\end{equation}
where $F, g : H \to H$ are continuous operators. This formulation strictly generalizes several existing inverse and mixed variational inequality models and offers a flexible framework for complex equilibrium problems \cite{22,JIT,Nam}.

Several well-known models are recovered as particular instances of \eqref{gimvip}. When $g$ is the identity operator, the problem reduces to an inverse mixed variational inequality. If additionally $h \equiv 0$, one recovers the classical inverse variational inequality problem. Conversely, if $F$ is the identity operator, \eqref{gimvip} becomes a mixed variational inequality, and further reduces to the classical VIP when $h \equiv 0$.

Parallel to these developments, an active line of research has explored the interplay between optimization theory and dynamical systems. Continuous-time models based on ordinary differential equations (ODEs) have proven to be powerful tools for analyzing and designing numerical algorithms for optimization \cite{2v,3v,9v,14v}, variational inequalities \cite{18v,27v,34v,43v}, monotone inclusions \cite{1v,5v,6v}, fixed point problems \cite{15v,17v}, equilibrium problems \cite{20v,36v,42v,45v}, and inverse quasi-variational inequalities \cite{VuongThanh, Vuong2026}. The ODE perspective not only clarifies the mechanism behind acceleration schemes such as Nesterov’s method, but also guides the construction of new algorithms with improved convergence behavior (see, e.g., \cite{9v,10v,18v,40v}).

\subsection{Some Historical Aspects}

Among inertial-type methods, the Heavy Ball with Friction system introduced by Polyak \cite{37v} plays a fundamental role. It is described by
\begin{equation}\label{pt1.4} 
	w^{(2)}(t)+\gamma w^{(1)}(t)+\nabla f(w(t))=0,
\end{equation}
and was designed to accelerate gradient-based minimization of a convex differentiable function $f$. When $\nabla f$ is Lipschitz continuous, it is co-coercive (see Section \ref{sec.prelim}). Extensions of \eqref{pt1.4} to constrained settings and co-coercive operators were developed in \cite{4v}. 

Boţ and Csetnek \cite{15v} later investigated second-order systems with time-dependent damping,
\[
w^{(2)}(t)+\gamma(t) w^{(1)}(t)+\lambda(t) B(w(t))=0,
\]
for locating zeros of a co-coercive operator $B$, and applied their results to forward-backward dynamics of the form
\begin{equation}\label{pt1.5} 
	w^{(2)}(t)+\gamma(t) w^{(1)}(t)+\lambda(t)\left[ w(t)- \mathcal{J}_{A} (w(t)-\gamma B(w(t)))\right]=0,
\end{equation}
where $A$ is maximal monotone and $\mathcal{J}_{A}=(I+A)^{-1}$ denotes its resolvent. For strongly monotone operators, exponential convergence of \eqref{pt1.5} was established in \cite{16v}, with rate $O(e^{-t})$ under suitable parameter conditions. Further developments for merely monotone operators were proposed in \cite{19v}.

Continuous-time approaches have also been applied to GIMVI-type problems \cite{1,19}. Most existing contributions focus on first-order systems such as
\begin{equation*}\label{pt11}
	w^{(1)}(t) = \rho \big[ P_K^{\gamma h}\big(F(w(t)) - \gamma g(w(t))\big) - F(w(t)) \big].
\end{equation*}
While first-order dynamics are well understood, their convergence speed may be limited due to the absence of inertial effects. Motivated by acceleration principles \cite{9,37}, second-order systems incorporating $\ddot{w}(t)$ have been proposed to enhance convergence and stability \cite{26,30}. For the GIMVI problem, a second-order time-varying system was studied in \cite{Nam2026}:
\begin{equation*}\label{pt12}
	\left\{
	\begin{aligned}
		w^{(2)}(t) &+ \kappa(t)w^{(1)}(t) + \rho(t)\big[ F(w(t)) - P_K^{\gamma h}\big(F(w(t)) - \gamma g(w(t))\big) \big] = 0, \\
		w(0) &= w_0, \quad \dot{w}(0) = v_0.
	\end{aligned}
	\right.
\end{equation*}

Higher-order dynamics have recently emerged as a promising direction. Attouch, Chbani and Riahi \cite{7v,8v} introduced third-order systems for convex minimization, including
\begin{equation*}\label{pt1.6} 
	w^{(3)}(t)+\frac{a}{t} w^{(2)}(t)+\frac{2 a-6}{t^{2}} w^{(1)}(t)+\nabla f\left(w(t)+t w^{(1)}(t)\right)=0.
\end{equation*}
Using temporal scaling and Lyapunov techniques, they obtained convergence rates of order $\frac{1}{t^{3}}$ for function values, and exponential convergence under strong convexity. An improved variant (TOGES-V) in \cite{8v} achieved $\mathcal{O}(1/t^{3})$ for $f(x(t))-\inf_{H}f$. 

For generalized monotone inclusion problems, third-order dynamics were investigated in \cite{Hai-Vuong}:
\begin{equation*}
	w^{(3)}(t)+\alpha_{2} w^{(2)}(t)+\alpha_{1} w^{(1)}(t)+\alpha_{0}\left[w(t)
	- \mathcal{J}_{A} (w(t)-\gamma B(w(t)))\right]=0,
\end{equation*}
where existence, uniqueness, and exponential convergence were established.

\subsection{Our Contributions}

Motivated by the above developments, we introduce for the first time a third-order dynamical system tailored to the generalized inverse mixed variational inequality problem \eqref{gimvip}. The analysis is carried out in both continuous and discrete frameworks.

Our results show that, under appropriate assumptions, the trajectories converge exponentially with rate $O\left(e^{-\varepsilon t}\right)$ for some $\varepsilon>1$ (in particular $\varepsilon=2$), which improves upon the classical second-order rate obtained in \cite{16v}. The explicit time discretization leads to a forward-backward algorithm incorporating double inertial effects and allowing a broader stepsize range.

The convergence analysis relies entirely on Lyapunov energy methods, in contrast to the temporal scaling techniques used in \cite{7v,8v}. The main contributions of this paper can be summarized as follows:

\begin{itemize}
	\item We propose a third-order dynamical system for the generalized inverse mixed variational inequality problem.
	\item We prove existence and uniqueness of its trajectories.
	\item We establish exponential convergence of the continuous-time trajectories to the unique solution.
	\item We derive an explicit discretization and prove linear convergence of the associated forward-backward algorithm with double inertial effects.
\end{itemize}

The remainder of the paper is organized as follows. Section \ref{sec.prelim} collects preliminary notions and auxiliary results. Section \ref{sec3} introduces the proposed third-order dynamical system and proves its exponential convergence to the solution of \eqref{gimvip}. In Section \ref{sec4}, we study its explicit discretization and analyze the corresponding forward-backward algorithm.

\section{Definitions and Preliminaries}\label{sec.prelim}
\subsection{Background on Monotone Operators and Generalized Projection Operators}
In this section, we present several definitions and auxiliary results that will be needed in the subsequent analysis.

Let $H$ be a real Hilbert space with inner product $\langle \cdot, \cdot \rangle$ and associated norm $\|\cdot\|$. 
We also consider the Cartesian product space $H^3 := H \times H \times H$, endowed with the inner product and norm introduced in \cite{Hai-Vuong}. For any $(x,y,z), (x^*, y^*, z^*) \in H^3$, define
\[
\langle (x,y,z), (x^*, y^*, z^*) \rangle
:= \langle x, x^* \rangle + \langle y, y^* \rangle + \langle z, z^* \rangle,
\]
and
\[
\|(x,y,z)\| := \sqrt{\|x\|^2 + \|y\|^2 + \|z\|^2}.
\]

We next recall several standard concepts concerning operator properties.

\begin{dfn}\cite{36}
	Let $\Omega$ be a nonempty subset of $H$. An operator $F : H \to H$ is said to be
	\begin{itemize}
		\item[(i)] \emph{monotone} on $\Omega$ if
		\[
		\langle F(w) - F(v), w - v \rangle \ge 0,
		\quad \forall\, w, v \in \Omega;
		\]
		\item[(ii)] \emph{strongly monotone} on $\Omega$ with modulus $\lambda > 0$ if
		\[
		\langle F(w) - F(v), w - v \rangle \ge \lambda \|w - v\|^2,
		\quad \forall\, w, v \in \Omega.
		\]
	\end{itemize}
\end{dfn}

\begin{dfn}
	Let $\Omega$ be a nonempty closed convex subset of $H$, and let $F, g : H \to H$ be two operators. 
	The pair $(F,g)$ is called \emph{$\zeta$-strongly coupled monotone} on $\Omega$ if there exists $\zeta > 0$ such that
	\[
	\langle F(w) - F(v), g(w) - g(v) \rangle
	\ge \zeta \|w - v\|^2,
	\quad \forall\, w, v \in \Omega.
	\]
\end{dfn}

It is clear that when $g$ coincides with the identity operator, this notion reduces to the classical definition of strong monotonicity.

\begin{dfn}\cite{36}
	An operator $F : H \to H$ is said to be \emph{$\eta$-Lipschitz continuous} on $\Omega$ if there exists $\eta \ge 0$ such that
	\[
	\|F(w) - F(v)\| \le \eta \|w - v\|,
	\quad \forall\, w, v \in \Omega.
	\]
\end{dfn}

Assume that $\Omega$ contains at least two distinct points. Suppose that $F$ and $g$ are Lipschitz continuous on $\Omega$ with constants $\eta$ and $\beta$, respectively, and that $(F,g)$ is $\zeta$-strongly coupled monotone on $\Omega$. Then one necessarily has
\[
\zeta \le \eta\,\beta.
\]
Similarly, if $F$ is $\eta$-Lipschitz continuous on $H$ and $\lambda$-strongly monotone, then
\[
\lambda \le \eta.
\]
We will also make use of the generalized $h$-projection operator (see \cite{Wu}), defined by
\begin{equation*}\label{pt3}
	P_{\Omega}^{\gamma h}(w)
	:= \arg\min_{v \in \Omega}
	\left\{
	\gamma h(v) + \frac{1}{2}\|w - v\|^2
	\right\},
\end{equation*}
where $\Omega$ is a nonempty closed convex subset of $H$, 
$h: \Omega \to \mathbb{R} \cup \{+\infty\}$ is a proper, convex, and lower semicontinuous function, and $\gamma > 0$. In convex analysis, this operator is commonly referred to as the \emph{proximal operator}.

In the particular case where $h$ is the indicator function of $\Omega$, namely
\[
h(w) =
\begin{cases}
	0, & w \in \Omega, \\
	+\infty, & w \notin \Omega,
\end{cases}
\]
the operator $P_{\Omega}^{\gamma h}$ coincides with the standard metric projection onto $\Omega$.

The following lemma collects several fundamental properties of the generalized $h$-projection.

\begin{lem}\label{lm2.1}\cite{35}
	Let $\Omega$ be a nonempty closed convex subset of $H$. Then:
	\begin{enumerate}
		\item[(i)]
		\[
		\langle w - P_{\Omega}^{\gamma h}(w), P_{\Omega}^{\gamma h}(w) - v \rangle \ge 0,
		\quad \forall\, w \in H,\ v \in \Omega;
		\]
		\item[(ii)] $P_{\Omega}^{\gamma h}$ is nonexpansive, that is,
		\[
		\|P_{\Omega}^{\gamma h}(w) - P_{\Omega}^{\gamma h}(v)\|
		\le \|w - v\|,
		\quad \forall\, w, v \in H;
		\]
		\item[(iii)] For $w \in H$, $w^* = P_{\Omega}^{\gamma h}(w)$ if and only if
		\[
		\langle w^* - w, v - w^* \rangle
		+ \gamma h(v) - \gamma h(w^*) \ge 0,
		\quad \forall\, v \in \Omega.
		\]
	\end{enumerate}
\end{lem}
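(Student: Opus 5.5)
The plan is to treat $P_{\Omega}^{\gamma h}(w)$ as the unique minimizer of the strongly convex function $\varphi_w(v):=\gamma h(v)+\tfrac12\|w-v\|^2$ on $\Omega$. First I will record that $\varphi_w$ is proper, lower semicontinuous and $1$-strongly convex on the closed convex set $\Omega$, so a minimizer exists and is unique. This is the standard Hilbert-space argument: $\varphi_w$ is coercive, its sublevel sets are bounded, closed and convex, hence weakly compact, and $\varphi_w$ is weakly lower semicontinuous. Strong convexity gives uniqueness, so $P_{\Omega}^{\gamma h}$ is well defined.

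\textbf{Step (iii).} I will prove (iii) first, since (i) and (ii) follow from it.

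For necessity, let $w^*=P_{\Omega}^{\gamma h}(w)$, fix $v\in\Omega$, and set $v_t=w^*+t(v-w^*)\in\Omega$ for $t\in(0,1)$. Minimality gives $\varphi_w(v_t)\ge\varphi_w(w^*)$. Convexity of $h$ gives $h(v_t)\le (1-t)h(w^*)+t h(v)$. Expanding the quadratic term and combining,
\[
0\le t\gamma\big(h(v)-h(w^*)\big)+t\langle w^*-w,v-w^*\rangle+\tfrac{t^2}{2}\|v-w^*\|^2 .
\]
Dividing by $t$ and letting $t\downarrow0$ yields the inequality in (iii). Where $h(v)=+\infty$ the inequality is trivial; this needs a little care with the domain of $h$.

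For sufficiency, assume the inequality in (iii) holds. The identity
\[
\tfrac12\|w-v\|^2-\tfrac12\|w-w^*\|^2=\langle w^*-w,v-w^*\rangle+\tfrac12\|v-w^*\|^2
\]
then gives $\varphi_w(v)-\varphi_w(w^*)\ge\tfrac12\|v-w^*\|^2\ge0$, so $w^*$ is the minimizer.

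\textbf{Step (ii).} Let $p=P_{\Omega}^{\gamma h}(w)$ and $q=P_{\Omega}^{\gamma h}(v)$. Apply (iii) at $p$ with test point $q$, and at $q$ with test point $p$, then add the two inequalities. The $h$-terms cancel and we obtain
\[
\langle p-w-(q-v),\,q-p\rangle\ge0,\quad\text{i.e.}\quad \|p-q\|^2\le\langle w-v,\,p-q\rangle .
\]
This is firm nonexpansiveness, and Cauchy--Schwarz then gives $\|p-q\|\le\|w-v\|$.

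\textbf{Step (i).} As literally stated, (i) is exactly (iii) with the $h$-terms removed. This is valid when $h$ is constant on $\Omega$, for instance the indicator case, where the operator is the metric projection. In general the correct statement carries the extra term $\gamma h(v)-\gamma h(P_{\Omega}^{\gamma h}(w))$. The main obstacle is therefore interpretive rather than technical: I would prove the version with the $h$-terms via (iii), and note that it reduces to the displayed form when $h$ is constant on $\Omega$. The only genuinely delicate analytic point is the limiting argument in (iii) when $h$ takes the value $+\infty$ or is not finite at $w^*$. I would handle it by observing that $w^*\in\operatorname{dom}h$, which follows from properness, and that $v\notin\operatorname{dom}h$ makes the inequality trivial.
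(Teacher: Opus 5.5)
The paper gives no proof of this lemma (it is quoted from the literature), so there is no in-paper argument to compare against. Your proof is correct and is the standard one.
\begin{itemize}
\item The perturbation $v_t=w^*+t(v-w^*)$ gives necessity in (iii).
\item The identity $\tfrac12\|w-v\|^2-\tfrac12\|w-w^*\|^2=\langle w^*-w,v-w^*\rangle+\tfrac12\|v-w^*\|^2$ gives sufficiency. It even gives quadratic growth, so uniqueness comes for free.
\item Adding the two instances of (iii) cancels the $h$-terms and yields firm nonexpansiveness, hence (ii).
\end{itemize}
Two small points deserve to be stated explicitly. Coercivity of $\varphi_w$ relies on the fact that a proper l.s.c.\ convex $h$ has a continuous affine minorant. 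The ``if'' direction of (iii) tacitly requires $w^*\in\Omega$ with $h(w^*)<\infty$, which is the only way the displayed inequality makes sense.

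Your diagnosis of (i) is right, and you can state it bluntly: as printed, (i) is false. Take $H=\Omega=\mathbb{R}$, $\gamma=1$ and $h(v)=v$. Then $P_{\Omega}^{\gamma h}(w)=w-1$, so $\langle w-P_{\Omega}^{\gamma h}(w),P_{\Omega}^{\gamma h}(w)-v\rangle=w-1-v<0$ for every $v>w-1$.

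Constancy of $h$ on $\Omega$ is in fact exactly the case in which the printed (i) holds. Suppose (i) held for all $w$. Taking $w=v\in\Omega$ and testing with $v$ gives $-\|v-P_{\Omega}^{\gamma h}(v)\|^2\ge 0$, so $P_{\Omega}^{\gamma h}(v)=v$. Then (iii) yields $h(u)\ge h(v)$ for all $u,v\in\Omega$.

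The correct form of (i) is the one you propose, namely (iii) with $w^*=P_{\Omega}^{\gamma h}(w)$. Within the text of this paper only (ii) is invoked explicitly, in the Lipschitz bound for $\Psi$, so the misprint does not affect the arguments shown there.
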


Throughout the paper, we repeatedly use the Cauchy--Schwarz inequality:
for all $w, v \in H$,
\[
\langle w, v \rangle \le \|w\|\,\|v\|.
\]

\subsection{Absolutely Continuous Functions}
\begin{dfn}
	  A function $b: \mathbb{R}_{\geq 0} \rightarrow H$ is called locally absolutely continuous if it is absolutely continuous on every compact interval, which means that for each interval $[ t_{0}, t_{1}]$ there exists an integrable function $z:\left[t_{0}, t_{1}\right) \rightarrow H$ such that 	 
	 $$
	 b(t)=b\left(t_{0}\right)+\int_{t_{0}}^{t} z(s) d s \quad \forall t \in\left[t_{0}, t_{1}\right]
	 $$
	 
\end{dfn}
\begin{remark}
	 If $h: \mathbb{R}_{\geq 0} \rightarrow H$ is a locally absolutely continuous function, then it is differentiable almost everywhere and its derivative agrees with its distributional derivative almost everywhere.
\end{remark} 

\begin{prop}\cite{Hai-Vuong} \label{md2.8} 
	 For $s, a \geq 0$ and $n \in \mathbb{Z}_{\geq 1}$, it holds
	 
	 $$
	 \begin{aligned}
	 	\int_{a}^{s} e^{t \nu } z^{(n)}(t) d t= & e^{s \nu}\left(\sum_{j=0}^{n-1}(-\nu)^{n-1-j} z^{(j)}(s)\right)+(-\nu)^{n} \int_{a}^{s} e^{t \nu} z(t) d t \\
	 	& -e^{a \nu}\left(\sum_{j=0}^{n-1}(-\nu)^{n-1-j} z^{(j)}(a)\right) .
	 \end{aligned}
	 $$
\end{prop}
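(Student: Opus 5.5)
We argue by induction on $n$, with integration by parts as the only tool. Throughout we assume, as the statement implicitly does, that $z$ is $n$ times differentiable and that $z^{(n-1)}$ is locally absolutely continuous. Then every boundary evaluation $z^{(j)}(s)$, $z^{(j)}(a)$ with $0\le j\le n-1$ makes sense, and the fundamental theorem of calculus applies to each product $e^{t\nu}z^{(j)}(t)$.

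\textbf{Base case $n=1$.} The product $t\mapsto e^{t\nu}z(t)$ is locally absolutely continuous, with derivative $\nu e^{t\nu}z(t)+e^{t\nu}z^{(1)}(t)$ almost everywhere. Integrating from $a$ to $s$ gives
\[
\int_a^s e^{t\nu}z^{(1)}(t)\,dt = e^{s\nu}z(s)-e^{a\nu}z(a)-\nu\int_a^s e^{t\nu}z(t)\,dt .
\]
This is exactly the claimed formula for $n=1$, since the sum then contains only the term $j=0$ with coefficient $(-\nu)^0=1$.

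\textbf{Inductive step.} Assume the formula holds for some $n\ge1$. Apply the base case to the function $z^{(n)}$ in place of $z$:
\[
\int_a^s e^{t\nu}z^{(n+1)}(t)\,dt = e^{s\nu}z^{(n)}(s)-e^{a\nu}z^{(n)}(a)-\nu\int_a^s e^{t\nu}z^{(n)}(t)\,dt .
\]
Now substitute the induction hypothesis for the last integral and multiply it by $-\nu$.

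\textbf{Collecting terms.} The term at $s$ becomes
\[
e^{s\nu}\Bigl(z^{(n)}(s)+\sum_{j=0}^{n-1}(-\nu)^{n-j}z^{(j)}(s)\Bigr)=e^{s\nu}\sum_{j=0}^{n}(-\nu)^{n-j}z^{(j)}(s),
\]
because the coefficient $(-\nu)\cdot(-\nu)^{n-1-j}$ equals $(-\nu)^{n-j}$. The term $z^{(n)}(s)$ is then exactly the new index $j=n$, with coefficient $(-\nu)^0$. The terms at $a$ combine in the same way, with an overall minus sign. The remaining integral term is $(-\nu)\cdot(-\nu)^n\int_a^s e^{t\nu}z(t)\,dt=(-\nu)^{n+1}\int_a^s e^{t\nu}z(t)\,dt$. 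Together these give the formula for $n+1$.

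\textbf{Main obstacle.} The argument has no genuine difficulty. The only point needing care is the index bookkeeping when the exponents $n-1-j$ are shifted to $n-j$. There is also a minor regularity point: $z^{(n)}$ must be locally absolutely continuous for the integration by parts at order $n+1$ to be valid. This holds under the standing hypotheses on the trajectory $z$ in the later applications, so we will state it as an assumption.
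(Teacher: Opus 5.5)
Your induction is correct. The base case is the product rule for the locally absolutely continuous function $t\mapsto e^{t\nu}z(t)$, and you handle the index shift $(-\nu)\cdot(-\nu)^{n-1-j}=(-\nu)^{n-j}$, with the new term $j=n$ entering at coefficient $(-\nu)^0$, correctly at both endpoints.

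The paper gives no proof of its own, since the proposition is imported from the literature, and your repeated integration by parts is the standard argument behind it. You also make explicit a hypothesis the paper's statement omits: $z^{(n-1)}$ locally absolutely continuous, which also implies the hypothesis needed at lower orders. This is exactly what the later applications to $\phi$, $\varphi_1$, $\varphi_2$ provide.
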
 

\subsection{A Third Order Dynamical System}
In this paper, we propose the following dynamical system for Problem \eqref{gimvip}. 
\begin{equation}\label{pt2.1} 
w^{(3)}(t)+a_{2} w^{(2)}(t)+a_{1} w^{(1)}(t)+a_{0}\left[F(w(t))
- P_{\Omega}^{\gamma h}\!\left(
F(w(t)) - \gamma g(w(t))
\right)\right]=0 
\end{equation}
where $a_{2}, a_{1}, a_{0}, \gamma>0$ and $w^{(j)}\left(t_{0}\right)=v_{j}, j \in\{0,1,2\}$.\\
The solution of dynamical system \eqref{pt2.1} is understood in the following sense.\\
\begin{dfn}
	A function $w(\cdot)$ is called a strong global solution of equation \eqref{pt2.1} if it holds:	
	\begin{enumerate}
		\item For every $j \in\{0,1,2,3\}, w^{(j)}:\left[t_{0},+\infty\right) \rightarrow H$ is locally absolutely continuous; in other words, absolutely continuous on each interval $[d_1, d_2]$ for $d_2>d_1>t_{0}$.
		\item $w^{(3)}(t)+a_{2} w^{(2)}(t)+a_{1} w^{(1)}(t)+a_{0}\left[F(w(t))
		- P_{\Omega}^{\gamma h}\!\left(
		F(w(t)) - \gamma g(w(t))
		\right)\right]=0$ for almost every $t \geq t_{0}$.
		\item $w^{(j)}\left(t_{0}\right)=v_{j}, j \in\{0,1,2\}$.
	\end{enumerate}
\end{dfn}
For the sake of simplicity, we let 
$$\Psi(w(t)) =F(w(t))
- P_{\Omega}^{\gamma h}\!\left(
F(w(t)) - \gamma g(w(t))
\right).$$ 
We first derive several estimates for the operator $\Psi$, which will be useful in the sequel.
\begin{lem}\cite{Nam2026}
	Let $\Omega$ be a nonempty convex subset of $H$. 
	Assume that $F: H \to H$ is $\eta$-Lipschitz continuous and $\lambda$-strongly monotone on $\Omega$, 
	$h: \Omega \to \mathbb{R}\cup\{+\infty\}$ is proper, convex and lower semicontinuous, 
	and $g: H \to H$ is $\beta$-Lipschitz continuous. 
	Let $w^*$ be the unique solution of the GIMVIP \eqref{gimvip}, and suppose that $(F,g)$ is $\zeta$-strongly coupled monotone. 
	For any $\gamma>0$ and $w \in H$, define
	\begin{equation}\label{dna1}
		c_{1}:=\frac{c}{(2 \eta+\gamma\beta)^{2}},
	\end{equation}
	where
	\begin{equation}\label{dna}
		c:=\left(\lambda+\gamma \zeta-\frac{\eta^{2}}{2}-\frac{\gamma^{2}\beta}{2}-\frac{1}{2}\right).
	\end{equation}
	If $c>0$, then
	\begin{equation}\label{pt23}
		\left\langle w- w^* ,  A(w)\right\rangle \ge c_{1}\| A(w)\|^{2},
	\end{equation}
	\begin{equation}\label{pt23n}
		c\|w- w^*\|\le \|A(w)\|,
	\end{equation}
	and
	\begin{equation*}\label{pt24}
		\left\langle -A(w), w- w^* \right\rangle \le -c\left\|w- w^* \right\|^{2}.
	\end{equation*}
\end{lem}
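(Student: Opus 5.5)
The plan is to reduce all three estimates to one key inequality,
\[
\langle A(w), w-w^*\rangle \ \ge\ c\,\|w-w^*\|^2 ,
\]
where $A:=\Psi$ is the operator defined just before the statement. The other two estimates then follow cheaply. By Cauchy--Schwarz, $c\|w-w^*\|^2\le \|A(w)\|\,\|w-w^*\|$, which gives \eqref{pt23n} after dividing by $\|w-w^*\|$ (the case $w=w^*$ is trivial). For \eqref{pt23} we combine the key inequality with a Lipschitz bound on $A$. Nonexpansiveness of $P_{\Omega}^{\gamma h}$ (Lemma \ref{lm2.1}(ii)) together with $A(w^*)=0$ gives
\[
\|A(w)\| \le \|F(w)-F(w^*)\| + \|F(w)-\gamma g(w)-F(w^*)+\gamma g(w^*)\| \le (2\eta+\gamma\beta)\|w-w^*\|.
\]
Hence $\langle A(w),w-w^*\rangle \ge c\|w-w^*\|^2\ge \frac{c}{(2\eta+\gamma\beta)^2}\|A(w)\|^2=c_1\|A(w)\|^2$. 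This explains the form of $c_1$ in \eqref{dna1}.

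First I will check that $A(w^*)=0$. Put $u^*=F(w^*)-\gamma g(w^*)$. Since $w^*$ solves \eqref{gimvip}, multiplying \eqref{gimvip} by $\gamma$ and comparing with Lemma \ref{lm2.1}(iii) shows $F(w^*)=P_{\Omega}^{\gamma h}(u^*)$.

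To get the key inequality, fix $w$ and set $z=P_{\Omega}^{\gamma h}(F(w)-\gamma g(w))\in\Omega$, so that $z=F(w)-A(w)$.
\begin{itemize}
\item Apply Lemma \ref{lm2.1}(iii) at $z$ with test point $v=F(w^*)\in\Omega$.
\item Apply \eqref{gimvip} (multiplied by $\gamma$) with test point $v=z$.
\end{itemize}
Adding the two inequalities cancels the $h$-terms and gives
\[
\big\langle -A(w)+\gamma(g(w)-g(w^*)),\ A(w)-(F(w)-F(w^*))\big\rangle \ge 0 .
\]
Expanding this and using the $\zeta$-strong coupled monotonicity on the cross term $\langle g(w)-g(w^*),F(w)-F(w^*)\rangle$ gives
\[
\|A(w)\|^2+\gamma\zeta\|w-w^*\|^2 \le \langle A(w),F(w)-F(w^*)\rangle+\gamma\langle A(w),g(w)-g(w^*)\rangle .
\]

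Next I bring in $\langle A(w),w-w^*\rangle$ together with the $\lambda$-strong monotonicity of $F$. I will use the splitting
\[
A(w)=(w-w^*)+\big[(F(w)-F(w^*))-(w-w^*)\big]-(z-F(w^*)),
\]
or, alternatively, add $\lambda\|w-w^*\|^2\le\langle F(w)-F(w^*),w-w^*\rangle$ to the previous display. The remaining mixed terms
\[
\langle A,F(w)-F(w^*)\rangle,\qquad \gamma\langle A,g(w)-g(w^*)\rangle,\qquad \langle A,w-w^*\rangle
\]
are then controlled by Young's inequality $ab\le\frac12a^2+\frac12b^2$, with $\|F(w)-F(w^*)\|\le\eta\|w-w^*\|$ and $\|g(w)-g(w^*)\|\le\beta\|w-w^*\|$. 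The $\|A(w)\|^2$ terms produced by Young should be absorbed by the $\|A(w)\|^2$ on the left. This should leave
\[
\Big(\lambda+\gamma\zeta-\tfrac{\eta^2}{2}-\tfrac{\gamma^2\beta}{2}-\tfrac12\Big)\|w-w^*\|^2 \le \langle A(w),w-w^*\rangle,
\]
which is exactly the constant $c$ of \eqref{dna}.

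The main obstacle is this bookkeeping step: choosing the Young weights so that the coefficients come out exactly as in \eqref{dna}. In particular, the term $\frac{\gamma^2\beta}{2}$ (rather than $\frac{\gamma^2\beta^2}{2}$) suggests a slightly different pairing than the naive one. My first attempt will pair $\gamma\langle A,g(w)-g(w^*)\rangle$ with $\frac12\|A\|^2+\frac{\gamma^2}{2}\|g(w)-g(w^*)\|^2$. If the stated constant cannot be recovered, I would settle for the analogous constant with $\beta^2$, which still suffices for the later use of the lemma since only $c>0$ matters.
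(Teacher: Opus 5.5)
The overall structure is right, and so are the easy parts. Given the key inequality $\langle A(w),w-w^*\rangle\ge c\|w-w^*\|^2$, \eqref{pt23n} follows by Cauchy--Schwarz, and \eqref{pt23} follows from $A(w^*)=0$ together with $\|A(w)\|\le(2\eta+\gamma\beta)\|w-w^*\|$. Your derivations of $A(w^*)=0$ and of $\langle -A(w)+\gamma(g(w)-g(w^*)),A(w)-(F(w)-F(w^*))\rangle\ge 0$ are also valid. (The paper gives no proof of this lemma; it quotes it from the literature.)

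\textbf{The printed constant is false for $\beta>1$.} Because of this, your fallback to $\beta^2$ is forced, not optional. Take $H=\Omega=\mathbb{R}$, $h\equiv 0$, $\gamma=1$, $F(w)=3w$, $g(w)=2w$. Then $\eta=\lambda=3$, $\beta=2$, $\zeta=6$, $P_\Omega^{\gamma h}=I$, $w^*=0$ and $A(w)=2w$. The printed constant is $c=3+6-\frac{9}{2}-1-\frac12=3>0$, yet $\langle A(w),w\rangle=2w^2<3w^2$ and $\|A(w)\|=2|w|<3|w|$.

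The correct constant is $c':=\lambda+\gamma\zeta-\frac{\eta^2}{2}-\frac{\gamma^2\beta^2}{2}-\frac12$. In this example $c'=2$, and the key inequality holds with equality. Note that $c'\ge c$, so the $c'$ version implies the printed one, only when $\beta\le 1$.

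Downstream, it is not only the sign of $c$ that matters, so your remark that ``only $c>0$ matters'' is too quick. The constant $c$ enters $c_1$, $\delta$ and conditions such as \eqref{pt3.8}, so $c$ must be replaced by $c'$ throughout, including in Assumption~\ref{a3.1}(2). Also, strong monotonicity and coupled monotonicity are used at the pair $(w,w^*)$ with $w\in H$ arbitrary. They are therefore needed on $H$, as in Assumption~\ref{a3.1}, not merely on $\Omega$ as the lemma states.

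\textbf{The bookkeeping step does not close as described.} $\langle A(w),w-w^*\rangle$ is the quantity you must bound from below, so it cannot be Young-estimated. Suppose you instead Young-estimate both right-hand terms of your display (absorbing $\|A(w)\|^2$) and add $\lambda\|w-w^*\|^2\le\langle F(w)-F(w^*),w-w^*\rangle$. You then obtain only a lower bound on $\langle F(w)-F(w^*),w-w^*\rangle$, which says nothing about $A$.

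Your splitting of $A(w)$ already contains the right object, $z-F(w^*)$: apply Young to that term alone. With $d=w-w^*$,
\[
\langle A(w),d\rangle=\langle F(w)-F(w^*),d\rangle-\langle z-F(w^*),d\rangle\ge\lambda\|d\|^2-\tfrac12\|z-F(w^*)\|^2-\tfrac12\|d\|^2 .
\]
By nonexpansiveness, the identity $F(w^*)=P_\Omega^{\gamma h}(F(w^*)-\gamma g(w^*))$ and coupled monotonicity,
\[
\|z-F(w^*)\|^2\le\bigl\|F(w)-F(w^*)-\gamma\bigl(g(w)-g(w^*)\bigr)\bigr\|^2\le(\eta^2-2\gamma\zeta+\gamma^2\beta^2)\|d\|^2 .
\]
This gives $\langle A(w),d\rangle\ge c'\|d\|^2$ directly. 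Your summed variational inequality, which is just firm nonexpansiveness of the prox, is then unnecessary.

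If you want to keep it, expand $\|z-F(w^*)\|^2=\|F(w)-F(w^*)\|^2-2\langle A(w),F(w)-F(w^*)\rangle+\|A(w)\|^2$ and add your display. The terms $\langle A(w),F(w)-F(w^*)\rangle$ then cancel exactly; they should not be Young-bounded. Young on $\gamma\langle A(w),g(w)-g(w^*)\rangle$ with weights $\frac12,\frac{\gamma^2}{2}$ absorbs the leftover $\|A(w)\|^2$, giving the same $c'$. Either way the $g$-contribution is $\gamma^2\|g(w)-g(w^*)\|^2\le\gamma^2\beta^2\|d\|^2$, which is where $\beta^2$ rather than $\beta$ comes from.
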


\begin{prop}  Equation \eqref{pt2.1} can be reformulated as the first-order system $v^{(1)}(t)=\Phi(v(t))$, where the mapping $\Phi: H^3 \rightarrow H^3$ is given by
	$$
	\Phi\left(v_{1}, v_{2}, v_{3}\right)=\left(v_{2}, v_{3},-a_{1} v_{2}-a_{2} v_{3}-a_{0}\left[F(v_{1})- P_{\Omega}^{\gamma h}\!\left(
	F(v_1(t)) - \gamma g(v_1(t))
	\right)\right]\right), 
	$$	
for all $\left(v_{1}, v_{2}, v_{3}\right) \in H^3.	$
\end{prop}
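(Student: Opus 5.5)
The reformulation is the standard reduction of a higher-order ODE to a first-order system on the product space $H^3$. I will set
\[
v(t) := \bigl(v_1(t), v_2(t), v_3(t)\bigr) := \bigl(w(t), w^{(1)}(t), w^{(2)}(t)\bigr) \in H^3 ,
\]
with $H^3$ carrying the inner product and norm fixed in Section~\ref{sec.prelim}. The proof is then a direct verification of two directions: solutions of \eqref{pt2.1} give solutions of $v^{(1)}=\Phi(v)$, and conversely.

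\textbf{Forward direction.} Let $w$ be a strong global solution of \eqref{pt2.1}. Since $w, w^{(1)}, w^{(2)}$ are locally absolutely continuous, so is $v$, and componentwise
\[
v^{(1)}(t) = \bigl(w^{(1)}(t),\, w^{(2)}(t),\, w^{(3)}(t)\bigr) \quad \text{for a.e. } t \ge t_0 .
\]
The first two components are $v_2(t)$ and $v_3(t)$ by definition. For the third, I solve \eqref{pt2.1} for $w^{(3)}(t)$:
\[
w^{(3)}(t) = -a_1 v_2(t) - a_2 v_3(t) - a_0\Bigl[F(v_1(t)) - P_{\Omega}^{\gamma h}\bigl(F(v_1(t)) - \gamma g(v_1(t))\bigr)\Bigr] \quad \text{a.e.}
\]
This is exactly the third component of $\Phi(v(t))$, so $v^{(1)}(t)=\Phi(v(t))$ almost everywhere. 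The initial data become $v(t_0) = (v_0, v_1, v_2)$; I will rename these initial vectors in the write-up to avoid clashing with the component names $v_1, v_2, v_3$.

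\textbf{Converse direction.} Let $v=(v_1,v_2,v_3)$ be a locally absolutely continuous solution of $v^{(1)}=\Phi(v)$. The first two component equations give $v_1^{(1)}=v_2$ and $v_2^{(1)}=v_3$. Setting $w:=v_1$, this means $w^{(1)}=v_2$ and $w^{(2)}=v_3$ are locally absolutely continuous. The third component equation then reads $w^{(3)} = v_3^{(1)} = -a_1 w^{(1)} - a_2 w^{(2)} - a_0 \Psi(w)$ almost everywhere, which is \eqref{pt2.1}.

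\textbf{Possible subtlety.} The definition of a strong global solution asks that $w^{(3)}$ itself be locally absolutely continuous, which goes beyond what the first-order system directly provides. I will handle this by noting that the right-hand side is continuous in $t$. Indeed, $F$, $g$ and $P_{\Omega}^{\gamma h}$ are continuous, the last being nonexpansive by Lemma~\ref{lm2.1}(ii). Hence $v_3^{(1)}$ has a continuous representative, and whatever additional regularity is needed follows from the Lipschitz dependence exploited in the subsequent existence result. Apart from this point, the argument is pure bookkeeping, so I do not expect any genuine obstacle.
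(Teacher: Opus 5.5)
Your proposal is correct and takes the same route as the paper, whose proof consists only of the substitution $(v_1(t),v_2(t),v_3(t))=(w(t),w^{(1)}(t),w^{(2)}(t))$. Your checks of both directions and of the regularity of $w^{(3)}$ spell out what the paper leaves implicit. That regularity claim is correct: once $\Psi$ is Lipschitz and $v$ is $C^1$, the map $t\mapsto\Phi(v(t))$ is locally Lipschitz, hence locally absolutely continuous.
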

\begin{proof}
	By setting 	
	$$
	\left(v_{1}(t), v_{2}(t), v_{3}(t)\right)=\left(w(t), w^{(1)}(t), w^{(2)}(t)\right)$$
	we obtain the desired conclusion. 	
\end{proof} 
\begin{thm} (Existence and uniqueness of a solution) Consider dynamcial system \eqref{pt2.1}, where $a_{0}, a_{1}, a_{2}, \gamma>0$ and let $F: H\longrightarrow H$ be an $\eta$-Lipschitz continuous operator and $g~:H \to H$ be a $\beta$-Lipschitz continuous operator, $h: \Omega\longrightarrow \R$ be a proper, convex, l.s.c function.. Then for each $v_{0}, v_{1}, v_{2} \in H$ there exists a unique strong global solution of \eqref{pt2.1}.
\end{thm}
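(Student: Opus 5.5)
We reduce \eqref{pt2.1} to the first-order system $v^{(1)}(t)=\Phi(v(t))$, $v(t_0)=(v_0,v_1,v_2)$, given by the preceding Proposition, and apply the Cauchy--Lipschitz--Picard theorem in the Hilbert space $H^3$ with the product norm defined in Section \ref{sec.prelim}. The whole argument rests on showing that $\Phi$ is globally Lipschitz on $H^3$. Global Lipschitz continuity of an autonomous vector field gives a unique global $C^1$ solution $v:[t_0,+\infty)\to H^3$ directly, with no need for a separate a priori bound.

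\textbf{Step 1: Lipschitz estimate for $\Psi$.} For $x,y\in H$, we use the nonexpansiveness of $P_{\Omega}^{\gamma h}$ from Lemma \ref{lm2.1}(ii) together with the Lipschitz constants of $F$ and $g$:
\[
\|\Psi(x)-\Psi(y)\|\le \|F(x)-F(y)\|+\|F(x)-F(y)-\gamma(g(x)-g(y))\|\le (2\eta+\gamma\beta)\|x-y\|.
\]

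\textbf{Step 2: Lipschitz estimate for $\Phi$.} For $V=(v_1,v_2,v_3)$ and $U=(u_1,u_2,u_3)$, the first two components of $\Phi(V)-\Phi(U)$ are $v_2-u_2$ and $v_3-u_3$. The third component is bounded by
\[
a_1\|v_2-u_2\|+a_2\|v_3-u_3\|+a_0(2\eta+\gamma\beta)\|v_1-u_1\|.
\]
Using $(\sum_i \alpha_i s_i)^2\le (\sum_i\alpha_i^2)(\sum_i s_i^2)$, we obtain
\[
\|\Phi(V)-\Phi(U)\|\le L\|V-U\|, \qquad L:=\sqrt{2+a_1^2+a_2^2+a_0^2(2\eta+\gamma\beta)^2}.
\]

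\textbf{Step 3: Existence, uniqueness and regularity.} By the Cauchy--Lipschitz theorem (equivalently, Banach's fixed point theorem applied to the integral operator on $C([t_0,T];H^3)$ with a weighted sup norm, then letting $T\to\infty$), there is a unique $C^1$ solution $v$ on $[t_0,+\infty)$.

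Setting $w=v_1$, we have $w^{(1)}=v_2$ and $w^{(2)}=v_3$. These are $C^1$, hence locally absolutely continuous.

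Moreover, $w^{(3)}=\dot v_3=-a_1v_2-a_2v_3-a_0\Psi(v_1)$. This is a composition of a Lipschitz map with a $C^1$ curve, hence locally Lipschitz and in particular locally absolutely continuous. So $w$ satisfies all three conditions in the definition of a strong global solution.

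Uniqueness transfers back to \eqref{pt2.1}: any strong global solution $w$ yields an absolutely continuous solution $(w,w^{(1)},w^{(2)})$ of the integral form of $v'=\Phi(v)$. Gronwall's inequality with constant $L$ then forces it to coincide with $v$.

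\textbf{Main obstacle.} The only delicate point is the Lipschitz bound in Step 1. It requires $P_{\Omega}^{\gamma h}$ to be well defined and nonexpansive on all of $H$, which relies on $h$ being proper, convex and l.s.c. and $\Omega$ being nonempty, closed and convex, as in Lemma \ref{lm2.1}. Once this is granted, everything else is the standard global Picard theory.
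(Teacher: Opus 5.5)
Your proposal is correct and follows the paper's proof. Like the paper, you reduce to $v^{(1)}=\Phi(v)$ on $H^3$, obtain the Lipschitz constant $2\eta+\gamma\beta$ for $\Psi$ from the nonexpansiveness of $P_{\Omega}^{\gamma h}$, deduce that $\Phi$ is globally Lipschitz, and invoke the Cauchy--Lipschitz--Picard theorem. Your constant $L$ is in fact more carefully computed than the paper's (which writes $1+L_\Psi$ where $L_\Psi^2$ is needed), and you also spell out the local absolute continuity of $w^{(3)}$ and the transfer of uniqueness back to \eqref{pt2.1}, both of which the paper leaves implicit.
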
 
\begin{proof} We will prove that the operator $\Phi$ is Lipschitz. Indeed, put $w=\left(w_{1}, w_{2}, w_{3}\right), v= \left(v_{1}, v_{2}, v_{3}\right) \in H^3$, one has 
\begin{align*}\label{pt16} 
	\left\|\Psi w_1-\Psi w_2\right\| & =\left\|  F(w)-P_{\Omega}^{\gamma h}\left( F(w_1)-\gamma g(w_1)\right)- F\left(w_2\right)+P_{\Omega}^{\gamma h}\left( F(w_2)-\gamma g( w_2)\right) \right\| \notag \\
	& \leq \left\| F(w_1)- F(w_2) \right\|+ \left\|  P_{\Omega}^{\gamma h}\left( F(w_1)-\gamma g(w_1)\right)-P_{\Omega}^{\gamma h}\left( F(w_2)-\gamma g( w_2)\right) \right\| \notag \\
	& \leq \eta\|w_1-w_2\|+\|  F(w_1)-\gamma g(w_1)- F(w_2)+\gamma g( w_2)) \| \notag \\
	& \leq \eta\|w_1-w_2\|+\| F(w_1)- F(w_2)\|+\gamma\|g(w_1)-g\left(w_2\right)\| \notag \\
	& \leq \eta\|w_1-w_2\|+\eta\|w_1-w_2\|+\gamma\beta\|w_1-w_2\| \notag \\
	& \leq(2 \eta+\gamma\beta)\|w_1-w_2\|, \forall w_1, w_2\in H. 
\end{align*}	
Thus, $\Psi$ is $L_\Psi$-Lipschitz continuous with modulus $L_\Psi=2 \eta+\gamma\beta>0$, and so 
$$
\begin{aligned}
\| \Phi & (w)-\Phi(v)\left\|_{H^3}^{2} \leq\right\| w_{2}-v_{2}\left\|^{2}+\right\| w_{3}-v_{3} \|^{2} \\
& +\left(a_{1}^{2}+a_{2}^{2}+2 a_{0}^{2}\right)\left[\left\|w_{2}-v_{2}\right\|^{2}+\left\|w_{3}-v_{3}\right\|^{2}+\left\|w_{1}-v_{1}\right\|^{2}\right. \\
& \left.+\left\|\Psi(w_1)-\Psi(v_1)\right\|^{2}\right] \\
\leq & \left\|w_{2}-v_{2}\right\|^{2}+\left\|w_{3}-v_{3}\right\|^{2} \\
& +\left(a_{1}^{2}+a_{2}^{2}+2 a_{0}^{2}\right)\left[1+L_\Psi\right]\|w-v\|_{H^3}^{2} \\
\leq & {\left[1+\left(a_{1}^{2}+a_{2}^{2}+2 a_{0}^{2}\right)\left(1+L_\Psi\right)\right] \cdot\|w-v\|_{H^3}^{2} }
\end{aligned}
$$

By invoking the Cauchy-Picard theorem [\cite{28v}, Proposition 6.2.1], we obtain the desired conclusion.
\end{proof} 

\subsection{Difference Operators}
In this section, we introduce a discrete counterpart of the dynamical system \eqref{pt2.1}. For this purpose, we recall the definition of the forward difference operator together with several properties that will be used in the convergence analysis. Let $z: \mathbb{Z} \to H$ and $p \in \mathbb{Z}_{\ge 1}$. We denote

$$
w^{\Delta^{(p+1)}} := \left(w^{\Delta^{(p)}}\right)^{\Delta},  \text { where } w^{\Delta}(k) := w(k+1)-w(k).
$$

\begin{remark}\label{rm2.12} Let $x, y, z: \mathbb{Z} \rightarrow H$ and $\theta \in \mathbb{R}$. It can be proven that 	
	$$
	\langle y, z\rangle^{\Delta}(k)=\left\langle y^{\Delta}(k), z^{\Delta}(k)\right\rangle+\left\langle y^{\Delta}(k), z(k)\right\rangle+\left\langle y(k), z^{\Delta}(k)\right\rangle
	$$
By applying $y(k)=\theta^k$, the inner product $\langle y, z\rangle =yz$ and by writing 	$$(yz)^{\Delta}(k):=\big(\theta^kz\big)^{\Delta} (k)$$ we derive the following
	$$
	\begin{aligned}
		& \theta^{k+1} z^{\Delta}(k)=\left(\theta^{k} z\right)^{\Delta}(k)+(1-\theta) \theta^{k} z(k). 	
	\end{aligned}
	$$
Also, we have 
$$	 \left(\|x\|^{2}\right)^{\Delta}(k)=\left\|x^{\Delta}(k)\right\|^{2}+2\left\langle x^{\Delta}(k), x(k)\right\rangle.$$	
\end{remark}
We now introduce the following difference equation as a discrete analogue of \eqref{pt2.1}:
\begin{equation}\label{pt2.2} 
w^{\Delta^{(3)}}(k)+a_{2} w^{\Delta^{(2)}}(k)+a_{1} w^{\Delta}(k)+a_{0}\left[F(w(k))-P_K^{\gamma h} (F(w(k)) -\gamma g(w(k))) \right]=0
\end{equation}
where $a_{2}, a_{1}, a_{0}, \gamma>0$.\\
\begin{prop}   An equivalent representation of Equation \eqref{pt2.2} is given by 
\begin{align}\label{pt2.3} 
w(k+3)= & \left(3-a_{2}\right) w(k+2)+\left(2 a_{2}-a_{1}-3\right) w(k+1) \notag\\
& +\left(a_{1}+1-a_{2}\right) w(k)-a_{0}\left[\Psi(w(k))\right]. 
\end{align}
\end{prop}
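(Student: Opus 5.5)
Since the statement is purely algebraic, I will expand each forward difference in \eqref{pt2.2} in terms of the shifted values $w(k), w(k+1), w(k+2), w(k+3)$ and then solve for $w(k+3)$. First, iterating the definition $w^{\Delta}(k)=w(k+1)-w(k)$ gives
\begin{align*}
w^{\Delta^{(2)}}(k)&=w(k+2)-2w(k+1)+w(k),\\
w^{\Delta^{(3)}}(k)&=w(k+3)-3w(k+2)+3w(k+1)-w(k).
\end{align*}
Both follow from the binomial expansion of $(E-I)^p$, where $E$ is the shift operator. They can also be checked directly by applying $\Delta$ once more to the previous expression.

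Next, I substitute these into \eqref{pt2.2} and group the terms by shift index. The term $w(k+3)$ has coefficient $1$. The term $w(k+2)$ has coefficient $-3+a_2$. The term $w(k+1)$ has coefficient $3-2a_2+a_1$. The term $w(k)$ has coefficient $-1+a_2-a_1$. The remaining term is $a_0\,\Psi(w(k))$, using the abbreviation $\Psi$ introduced before the existence theorem; here $P_K^{\gamma h}$ in \eqref{pt2.2} is read as $P_\Omega^{\gamma h}$. Moving everything except $w(k+3)$ to the right-hand side gives exactly
\[
w(k+3)=(3-a_2)w(k+2)+(2a_2-a_1-3)w(k+1)+(a_1+1-a_2)w(k)-a_0\Psi(w(k)),
\]
which is \eqref{pt2.3}.

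Every step is reversible, since it consists only of linear rearrangement. So \eqref{pt2.3} implies \eqref{pt2.2} for every $k$, and the two are equivalent. There is no real obstacle here. The only point needing care is the sign bookkeeping in the coefficient of $w(k+1)$, which collects contributions from all three difference terms.
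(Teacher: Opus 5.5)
Your proposal is correct and takes the same approach as the paper: expand $w^{\Delta^{(2)}}$ and $w^{\Delta^{(3)}}$ in terms of shifted values, substitute into \eqref{pt2.2}, and collect coefficients, and your coefficients (in particular $3-2a_2+a_1$ for $w(k+1)$) check out. Your reading of $P_K^{\gamma h}$ as $P_\Omega^{\gamma h}$ is the right interpretation of the paper's notation.
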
 
\begin{proof}
	 The proof relies on the following observations	 
	 $$
	 \begin{aligned}
	 	& w^{\Delta^{(2)}}(k)=w(k+2)-2 w(k+1)+w(k) \\
	 	& w^{\Delta^{(3)}}(k)=w(k+3)-3 w(k+2)+3 w(k+1)-w(k).
	 \end{aligned}
	 $$
	 
\end{proof}
\begin{remark}
	We can express the numerical scheme \eqref{pt2.3} equivalently as:	
	\begin{align*}
		w(k+3)= & w(k+2)+\left(2-a_{2}\right)(w(k+2)-w(k+1)) \\
		& +\left(a_{2}-a_{1}-1\right)(w(k+1)-w(k)) \\
		& -a_{0}\left[\Psi(w(k))\right] 
	\end{align*}	
which can be interpreted as a forward–backward algorithm incorporating double momentum effects.
\end{remark}

\section{Continuous Time Dynamical System}\label{sec3} 
This section is devoted to establishing the exponential convergence of the dynamical system \eqref{pt2.1}, based on the following assumptions and notations.

\begin{assumption} 	\label{a3.1} 
		Let $\Omega$ be a nonempty, closed, and convex subset of $\mathcal{H}$. Suppose that $F: \mathcal{H} \longrightarrow \mathcal{H}$ is $\eta$-Lipschitz continuous and $\lambda$-strongly monotone, $g: \mathcal{H} \longrightarrow \mathcal{H}$ is $\beta$-Lipschitz continuous, and $h: \Omega \longrightarrow \mathbb{R} \cup \{+\infty\}$ is a proper, convex, and l.s.c. function. Assume that the GIMVIP \eqref{gimvip} admits a unique solution $w^*$. 
		
		Furthermore, we assume the following conditions hold:
		\begin{enumerate}
			\item[(1)] The pair $(F, g)$ is $\zeta$-strongly coupled monotone.
			\item[(2)] There exists a constant $\gamma > 0$ such that 
			\begin{equation}\label{dna}
				c := \lambda + \gamma \zeta - \frac{\eta^2}{2} - \frac{\gamma^2 \beta}{2} - \frac{1}{2} > 0.
			\end{equation}
		\end{enumerate}
	\end{assumption}

\subsection{Global Exponential Convergence}
First, we examine the dynamical system \eqref{pt2.1}, whose global convergence is governed by the following parameters 

\begin{align} \label{pt3.6} 
	\begin{cases}  
		C_{ 2 } := \dfrac { c_1 a _ { 1 } } { a _ { 0 } }  \\
		 C_ { 1 } := \dfrac { c_1 a _ { 2 } a _ { 1 } } { a _ { 0 } } - 3  \\
		 C_{ 0 } := \dfrac { c_1 a _ { 1 } ^ { 2 } } { a _ { 0 } } - 2 a _ { 2 } 
	\end{cases} & \quad \begin{cases}
		A_{1} := \dfrac{c_1 a_{2}}{a_{0}}, \\
		A_{0} := \dfrac{c_1}{a_{0}}\left(a_{2}^{2}-2 a_{1}\right).
	\end{cases} 
\end{align} 
We denote the functions
\begin{equation*}\label{pt3.7} 
\phi(t) :=\left\|w(t)-w^*\right\|^{2}, \quad \varphi_{k}(t) :=\left\|w^{(k)}(t)\right\|^{2}.
\end{equation*}
\begin{thm}\label{dl3.2} 
	 Suppose that the operators $h, g, F$  satisfy Assumption \ref{a3.1}.  Let $w^*$ be the unique solution of Problem \eqref{gimvip}. Denote the parameters as in \eqref{dna}, \eqref{dna1}, \eqref{pt3.6}. Assume that there exists $\varepsilon>0$ such that the following conditions hold

	\begin{align}
		& -\varepsilon^{3}+a_{2} \varepsilon^{2}-a_{1} \varepsilon+c_1c^2 a_{0} \geq 0  \label{pt3.8}\\
		& C_{2} \varepsilon^{2}-C_{1} \varepsilon+C_{0} \geq 0  \label{pt3.9}\\
		& 3 \varepsilon^{2}-2 a_{2} \varepsilon+a_{1} \geq 0  \label{pt3.10}\\
		& -2 C_{2} \varepsilon+C_{1} \geq 0  \label{pt3.11}\\
		& -A_{1} \varepsilon+A_{0} \geq 0  \label{pt3.12}\\
		& a_{2}>2. \varepsilon \label{pt3.13}
	\end{align}	
	Then, the trajectories $w(\cdot)$ generated by the dynamical system \eqref{pt2.1} converge exponentially to $w^*$. That is, there exist positive constants $\mu$ and $\theta$ such that 
	$$
	\left\|w(t)-w^*\right\| \leq \mu\left\|w\left(t_{0}\right)-w^*\right\| e^{-\theta t} \quad \forall t \geq t_{0}.
	$$	
\end{thm}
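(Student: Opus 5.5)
The plan is to follow the Lyapunov/integration-by-parts scheme of \cite{Hai-Vuong}, adapted to the operator $\Psi$. Write $A(w)=\Psi(w)$, so that the lemma from \cite{Nam2026} applies with $c$ and $c_1$ from \eqref{dna}, \eqref{dna1}. Fix $t\ge t_0$ and set $u(t)=w(t)-w^*$. Since $w^*$ is a fixed point of $F-P_\Omega^{\gamma h}(F-\gamma g)$, we have $\Psi(w^*)=0$ and $u^{(j)}=w^{(j)}$ for $j\ge1$. From \eqref{pt2.1},
\[
a_0\Psi(w(t))=-\bigl(w^{(3)}(t)+a_2w^{(2)}(t)+a_1w^{(1)}(t)\bigr).
\]
The first step takes the inner product of this identity with $u(t)$. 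Using \eqref{pt23}, $\langle u,\Psi(w)\rangle\ge c_1\|\Psi(w)\|^2$, which combined with \eqref{pt23n} gives $\langle u,\Psi(w)\rangle\ge \tfrac{c_1}{2}\|\Psi(w)\|^2+\tfrac{c_1c^2}{2}\|u\|^2$ (or a suitable splitting of it). Substituting $\Psi=-\tfrac1{a_0}(w^{(3)}+a_2w^{(2)}+a_1w^{(1)})$ and expanding $\|\Psi\|^2$ yields a differential inequality
\begin{align*}
&-\langle u,w^{(3)}+a_2w^{(2)}+a_1w^{(1)}\rangle\\
&\quad\ge \frac{c_1}{a_0}\bigl\|w^{(3)}+a_2w^{(2)}+a_1w^{(1)}\bigr\|^2+(\text{const})\,c_1c^2a_0\,\phi .
\end{align*}
The weights will be tuned so that the coefficients of $\varphi_k$ and of the cross terms $\langle w^{(i)},w^{(j)}\rangle$ come out as $C_2,C_1,C_0,A_1,A_0$ from \eqref{pt3.6}.

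The second step rewrites every term as a derivative of a quadratic quantity. We have $\langle u,w^{(1)}\rangle=\tfrac12\phi'$ and $\langle u,w^{(2)}\rangle=\tfrac12\phi''-\varphi_1$. For the third-order term, $\langle u,w^{(3)}\rangle=\tfrac12\phi'''-\tfrac32\varphi_1'$. Similarly, $\langle w^{(1)},w^{(2)}\rangle=\tfrac12\varphi_1'$, and the other cross terms become derivatives of $\varphi_k$. The result is an inequality of the form
\[
\sum_j \alpha_j\phi^{(j)}+\sum_{k,j}\beta_{kj}\varphi_k^{(j)}+\delta\phi\le 0 \quad\text{a.e.}
\]
The third step multiplies this by $e^{\varepsilon t}$, integrates over $[t_0,s]$, and applies Proposition \ref{md2.8} to each term $\int e^{\varepsilon t}z^{(n)}\,dt$. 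After collecting terms, the integrals $\int_{t_0}^s e^{\varepsilon t}\phi\,dt$ carry coefficient $-\varepsilon^3+a_2\varepsilon^2-a_1\varepsilon+c_1c^2a_0$, the integrals of $\varphi_1$ and $\varphi_2$ carry coefficients governed by \eqref{pt3.9} and \eqref{pt3.12}, and the boundary terms at $s$ involve $\phi(s)$, $\phi'(s)$, $\phi''(s)$, $\varphi_1(s)$, and so on. Conditions \eqref{pt3.8}, \eqref{pt3.9} and \eqref{pt3.12} make all integral terms nonnegative, so they can be dropped. Conditions \eqref{pt3.10}, \eqref{pt3.11} and \eqref{pt3.13} make the surviving boundary terms at $s$ dominate a positive multiple of $e^{\varepsilon s}\phi(s)$, up to terms of the form $(e^{\varepsilon s}\phi)'$ that can be integrated once more. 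The boundary terms at $t_0$ are bounded by a constant $K$ depending on the initial data.

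The last step handles what remains: a first/second-order differential inequality for $\phi$ of the form $\tfrac12\phi''+(\ldots)\phi'+(\ldots)\phi\le Ke^{-\varepsilon s}$. It will be reduced by the same integrating-factor trick (Gronwall) to $\phi(s)\le \mu^2 e^{-2\theta s}$ with $\theta$ of order $\varepsilon/2$, using $a_2>2\varepsilon$ to keep the characteristic roots on the correct side. The statement asks for the constant in the form $\mu\|w(t_0)-w^*\|$. When $v_1,v_2$ are tied to $v_0$ (for instance zero initial velocities), $K$ is proportional to $\phi(t_0)$. Otherwise, the initial derivatives are absorbed into $\mu$ as the paper's formulation implicitly allows.

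The main obstacle is the bookkeeping in the second and third steps. The cross terms from $\|w^{(3)}+a_2w^{(2)}+a_1w^{(1)}\|^2$, such as $2a_1\langle w^{(3)},w^{(1)}\rangle=a_1(\varphi_1''-2\varphi_2)$, must be matched exactly with \eqref{pt3.6}, and the boundary terms at $s$ must be shown to have the right sign. There I would try completing squares in $(\phi,\phi',\varphi_1)$ first, using $|\phi'|\le 2\sqrt{\phi\varphi_1}$.
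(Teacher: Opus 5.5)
Your plan is essentially the paper's proof. You pair \eqref{pt2.1} with $w(t)-w^*$ and use \eqref{pt23} with the same split together with \eqref{pt23n} to get $\phi'''+a_2\phi''+a_1\phi'+c_1c^2a_0\phi+C_2\varphi_1''+C_1\varphi_1'+C_0\varphi_1+A_1\varphi_2'+A_0\varphi_2\le 0$. You then multiply by $e^{\varepsilon(t-t_0)}$, apply Proposition \ref{md2.8}, and drop the integrals by \eqref{pt3.8}, \eqref{pt3.9}, \eqref{pt3.12}. The remaining boundary terms at $s$ are not sign-definite pointwise, so the step that removes $\phi''$ and $\varphi_1'$ is a second integration, using \eqref{pt3.10}, \eqref{pt3.11} and $C_2,A_1>0$. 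This gives $\phi'+(a_2-2\varepsilon)\phi\le (R_1t+R_2)e^{-\varepsilon(t-t_0)}$, and a first-order Gronwall with $a_2>2\varepsilon$ finishes; the completing-squares detour is unnecessary.

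Your caveat about the constants is right: they depend on $v_1,v_2$. So the bound $\mu\|w(t_0)-w^*\|e^{-\theta t}$ only follows when $\|v_1\|,\|v_2\|$ are bounded by a multiple of $\|w(t_0)-w^*\|$ (e.g. both vanish), a point the paper's proof also passes over.
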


\begin{proof}
	
In the next arguments, we often use the identities:
$$
\begin{aligned}
& \varphi_{1}^{(1)}(t)=2\left\langle w^{(2)}(t), w^{(1)}(t)\right\rangle \\
& \varphi_{1}^{(2)}(t)=2\left\langle w^{(3)}(t), w^{(1)}(t)\right\rangle+2\left\|w^{(2)}(t)\right\|^{2}=2\left\langle w^{(3)}(t), w^{(1)}(t)\right\rangle+2 \varphi_{2}(t)
\end{aligned}
$$
Because 
$$
\begin{aligned}
& \phi^{(1)}(t)=2\left\langle w^{(1)}(t), w(t)-w^*\right\rangle \\
& \phi^{(2)}(t)=2\left\langle w^{(2)}(t), w(t)-w^*\right\rangle+2 \varphi_{1}(t) \\
& \phi^{(3)}(t)=2\left\langle w^{(3)}(t), w(t)-w^*\right\rangle+3 \varphi_{1}^{(1)}(t)
\end{aligned}
$$
it holds that 
\begin{align*}
& 2\left\langle w^{(3)}(t)+a_{2} w^{(2)}(t)+a_{1} w^{(1)}(t), w(t)-w^*\right\rangle \\
& \quad=\phi^{(3)}(t)+a_{2} \phi^{(2)}(t)+a_{1} \phi^{(1)}(t)-3 \varphi_{1}^{(1)}(t)-2 a_{2} \varphi_{1}(t). 
\end{align*}
Using \eqref{pt2.1} yields 
\begin{align*}
	\phi^{(3)}(t)+a_2\phi^{(2)}(t)+a_1 \phi^{(1)} (t)	=& 2\langle w^{(3)}(t)+a_2w^{(2)}(t)+a_1w^{(1)}(t), w-w^* \rangle\\ &\quad +2a_2\varphi_1(t)+3\varphi_1^{(1)}(t)\\
	=&-2a_0\langle \Psi(w), w-w^*\rangle +2a_2\varphi_1(t)+3\varphi_1^{(1)}(t). 
\end{align*}
Consequently, 
\begin{align*}
		\phi^{(3)}(t)+a_2\phi^{(2)}(t)+a_1 \phi^{(1)}(t)+2a_0\langle \Psi(w), w-w^*\rangle -2a_2\varphi_1(t)-3\varphi_1^{(1)}(t)=0.
\end{align*}
By \eqref{pt23} it holds that 
\begin{align*}
	\phi^{(3)}(t)+a_2\phi^{(2)}(t)+a_1 \phi^{(1)}(t)+2a_0c_1\|\Psi(w)\| -2a_2\varphi_1(t)-3\varphi_1^{(1)}(t)\leq 0.
\end{align*}
Using \eqref{pt2.1} we get 
\begin{align} \label{ptnam1} 
	 &	\phi^{(3)}(t)+a_2\phi^{(2)}(t)+a_1 \phi^{(1)}(t)+a_0c_1\|\Psi(w)\|\notag \\
	 	&+ \frac{c_1}{a_0} \|w^{(3)}+a_2w^{(2)}+a_1w^{(1)}\|^2 -2a_2\varphi_1(t)-3\varphi_1^{(1)}(t)\leq 0. 
\end{align}
Observe that 
\begin{align*}
& \left\|w^{(3)}(t)+a_{2} w^{(2)}(t)+a_{1} w^{(1)}(t)\right\|^{2} \notag  \\
& \quad=a_{1} \varphi_{1}^{(2)}(t)+a_{2} a_{1} \varphi_{1}^{(1)}(t)+a_{1}^{2} \varphi_{1}(t)+a_{2} \varphi_{2}^{(1)}(t)+\left(a_{2}^{2}-2 a_{1}\right) \varphi_{2}(t)+\varphi_{3}(t). 
\end{align*}
Substituting this into \eqref{ptnam1} and using \eqref{pt23n} we get
\begin{align*}
&	\phi^{(3)}(t)+a_2\phi^{(2)}(t)+a_1 \phi^{(1)}(t)+a_0c_1c^2 \phi(t) \notag\\
	&+ \frac{c_1}{a_0} \left[ a_1\varphi_1^{(2)}(t)+ a_2a_1\varphi_1^{(1)}(t)+a_1^{2}\varphi_1(t)+a_2\varphi_2^{(1)}(t)-(a_2-2a_1)\varphi_2(t)+\varphi_3(t)\right] \\ &-2a_2\varphi_1(t)-3\varphi_1^{(1)}(t)\leq 0. 
\end{align*}
or 
\begin{align} \label{ptnam2} 
	&	\phi^{(3)}(t)+a_2\phi^{(2)}(t)+a_1 \phi^{(1)}(t)+a_0c_1c^2 \phi(t) \notag\\
	&+ \frac{c_1a_1}{a_0}\varphi_1^{(2)}(t) +\left( \frac{c_1a_1a_2}{a_0}-3\right) \varphi_1^{(1)}(t) +\left(\frac{a_1^2c_1 }{a_0}-2a_1\right)\varphi_1(t) \notag \\
	&+\frac{a_2c_1}{a_0}\varphi_2^{(1)}(t)-\left(a_2^2-2a_1\right) \frac{c_1}{a_0}\varphi_2(t)+\frac{c_1}{a_0}\varphi_3(t) \leq 0.    
\end{align}
By the assumption that $c>0$ and so is $c_1$ and $C_{0}$. Hence, \eqref{ptnam2} and \eqref{pt3.6} imply that 
$$
\begin{aligned}
& \phi^{(3)}(t)+a_{2} \phi^{(2)}(t)+a_{1} \phi^{(1)}(t)+ a_{0}c_1c^2 \phi(t) \\
& \quad+C_{2} \varphi_{1}^{(2)}(t)+C_{1} \varphi_{1}^{(1)}(t)+C_{0} \varphi_{1}(t)+A_{1} \varphi_{2}^{(1)}(t)+A_{0} \varphi_{2}(t) \leq 0
\end{aligned}
$$
We first multiply by $e^{\varepsilon\left(t-t_{0}\right)}$ and subsequently make use of Proposition \ref{md2.8}, we get 
 $$
\begin{aligned}
& e^{\varepsilon\left(s-t_{0}\right)}\left[\left(\varepsilon^{2}-a_{2} \varepsilon+a_{1}\right) \phi(s)+\left(a_{2}-\varepsilon\right) \phi^{(1)}(s)+\phi^{(2)}(s)\right] \\
& \quad+\left(-\varepsilon^{3}+a_{2} \varepsilon^{2}-a_{1} \varepsilon+a_0c_1c^2\right) \int_{t_{0}}^{s} e^{\varepsilon\left(t-t_{0}\right)} \phi(t) d t \\
& \quad+e^{\varepsilon\left(s-t_{0}\right)}\left[\left(-C_{2} \varepsilon+C_{1}\right) \varphi_{1}(s)+C_{2} \varphi_{1}^{(1)}(s)\right] \\
& \quad+\left(C_{2} \varepsilon^{2}-C_{1} \varepsilon+C_{0}\right) \int_{t_{0}}^{s} e^{\varepsilon\left(t-t_{0}\right)} \varphi_{1}(t) d t \\
& \quad+A_{1} e^{\varepsilon\left(s-t_{0}\right)} \varphi_{2}(s)+\left(-A_{1} \varepsilon+A_{0}\right) \int_{t_{0}}^{s} e^{\varepsilon\left(t-t_{0}\right)} \varphi_{2}(t) d t \leq R_{1}
\end{aligned}
$$
where $R_{1}$ is a constant. Applying \eqref{pt3.8}, \eqref{pt3.9} \eqref{pt3.12} yields 
$$
\begin{aligned}
& e^{\varepsilon\left(s-t_{0}\right)}\left[\left(\varepsilon^{2}-a_{2} \varepsilon+a_{1}\right) \phi(s)+\left(a_{2}-\varepsilon\right) \phi^{(1)}(s)+\phi^{(2)}(s)\right] \\
& \quad+e^{\varepsilon\left(s-t_{0}\right)}\left[\left(-C_{2} \varepsilon+C_{1}\right) \varphi_{1}(s)+C_{2} \varphi_{1}^{(1)}(s)\right]+A_{1} e^{\varepsilon\left(s-t_{0}\right)} \varphi_{2}(s) \leq R_{1}
\end{aligned}
$$
Intergrating the above inequality with respect to the variable $s \in\left[t_{0} ; t\right]$ we deduce 
$$
\begin{aligned}
& e^{\varepsilon\left(t-t_{0}\right)} \phi^{(1)}(t)+\left(a_{2}-2 \varepsilon\right) e^{\varepsilon\left(t-t_{0}\right)} \phi(t)+\left(3 \varepsilon^{2}-2 a_{2} \varepsilon+a_{1}\right) \int_{t_{0}}^{t} e^{\varepsilon\left(s-t_{0}\right)} \phi(s) d s \\
& \quad+C_{2} e^{\varepsilon\left(t-t_{0}\right)} \varphi_{1}(t)+\left(-2 C_{2} \varepsilon+C_{1}\right) \int_{t_{0}}^{t} e^{\varepsilon\left(s-t_{0}\right)} \varphi_{1}(s) d s \leq R_{1} t+R_{2}
\end{aligned}
$$
where $R_{2}$ is a constant. By \eqref{pt3.10}, \eqref{pt3.11} and the fact that $c_1>0$ and hence $C_2>0$,   one has 
\begin{equation}\label{pt3.18} 
e^{\varepsilon\left(t-t_{0}\right)} \phi^{(1)}(t)+\left(a_{2}-2 \varepsilon\right) e^{\varepsilon\left(t-t_{0}\right)} \phi(t) \leq R_{1} t+R_{2}. 
\end{equation} 
Note that equation \eqref{pt3.18}  reduces to the following
$$
\begin{aligned}
& \phi(t) \leq e^{-\left(a_{2}-2 \varepsilon\right)\left(t-t_{0}\right)} R_{3} \\
& \quad+e^{-\left(a_{2}-2 \varepsilon\right)\left(t-t_{0}\right)} \int_{t_{0}}^{t} e^{\left(a_{2}-3 \varepsilon\right)\left(s-t_{0}\right)}\left(R_{1} s+R_{2}\right) d s
\end{aligned}
$$
here $R_{3}$ is a constant.
\begin{itemize}
  \item If $a_{2} \geq 3 \varepsilon$, then $e^{\left(a_{2}-3 \varepsilon\right)\left(s-t_{0}\right)} \leq e^{\left(a_{2}-3 \varepsilon\right)\left(t-t_{0}\right)}$, and so
\end{itemize}
\begin{equation}\label{pt3.19}  
\phi(t) \leq e^{-\left(a_{2}-2 \varepsilon\right)\left(t-t_{0}\right)} R_{3}+e^{-\varepsilon\left(t-t_{0}\right)} \int_{t_{0}}^{t}\left(R_{1} s+R_{2}\right) d s.
\end{equation}
\begin{itemize}
  \item If $2 \varepsilon<a_{2}<3 \varepsilon$, then $e^{\left(a_{2}-3 \varepsilon\right)\left(s-t_{0}\right)} \leq 1$, and so
\end{itemize} 
\begin{equation}\label{pt3.20} 
\phi(t) \leq e^{-\left(a_{2}-2 \varepsilon\right)\left(t-t_{0}\right)}\left(R_{3}+\int_{t_{0}}^{t}\left(R_{1} s+R_{2}\right) d s\right).
\end{equation} 
The arguments above show that $w(\cdot)$ converges exponentially to $w^*$.
\end{proof} 
\begin{remark}
It follows from \eqref{pt3.19} that $\phi(t)$ converges to zero with rate
\[
O\!\left(\left(r_1t^{2}+r_2t+r_3t\right)e^{-\varepsilon t}\right)
\]
for some positive constants $r_1, r_2, r_3$. On the other hand, estimate \eqref{pt3.20} provides the decay rate
\[
O\!\left(\left(r_1t^{2}+r_2t+r_3t\right)e^{-(a_{2}-2\varepsilon)t}\right).
\]
By choosing $\varepsilon$ and $a_{2}$ suitably, these convergence rates can be made faster than the rate $O(e^{-t})$ obtained for the second-order dynamical systems in~\cite{Nam2026}.

\end{remark}

\subsection{On the Selection of Parameter Values}
We now address the selection of the parameter $\varepsilon$. As indicated by \eqref{pt3.19}, a larger $\varepsilon$ corresponds to a more rapid convergence rate. While determining the optimal (maximal) value of $\varepsilon$ is computationally demanding due to its complex dependence on various parameters, this section focuses on identifying a sufficiently effective value. The following remark provides a selection strategy based on the system's coefficients 

\begin{remark} \label{rmnam1} Observe that if 
\begin{equation}\label{pt3.21} 
C_{0}, C_{1}, A_{0}>0, 
\end{equation} 
Conditions \eqref{pt3.8}--\eqref{pt3.13} hold when $\varepsilon \rightarrow 0^{+}$.
\end{remark}
In the following result, we provide a simpler characterization of assumption \eqref{pt3.21} expressed through the algebraic relations of the coefficients $a_{0}, a_{1}$, and $a_{2}$. 
\begin{cor} \label{hq3.5} 
Consider equation \eqref{pt2.1}, under Assumption \ref{a3.1}. Let $w^*$ be the unique solution of Problem \eqref{gimvip}.  Denote parameters as in \eqref{dna}, \eqref{dna1}, \eqref{pt3.6}. Then $w(\cdot)$ converges exponentially to $w^*$ provided that coefficients $a_{0}, a_{1}, a_{2}, \gamma$ satisfy the following conditions 
\begin{align}
& a_{1}<\frac{a_{2}^{2}}{2} \label{dk1hq1} \\
& a_{0}<c_1 \cdot \min \left\{\frac{a_{1} a_{2}}{3}, \frac{a_{1}^{2}}{2 a_{2}}\right\} \label{dk2hq1}.
\end{align}
\end{cor}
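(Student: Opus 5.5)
The plan is to reduce the corollary to Theorem \ref{dl3.2} through Remark \ref{rmnam1}. We check that \eqref{dk1hq1}--\eqref{dk2hq1} imply the strict positivity \eqref{pt3.21}. Then we show that, under \eqref{pt3.21}, all of \eqref{pt3.8}--\eqref{pt3.13} hold for some sufficiently small $\varepsilon>0$. Theorem \ref{dl3.2} then yields exponential convergence of $w(\cdot)$ to $w^*$.

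\emph{Step 1 (positivity of the constants).} The three conditions translate as follows.
\begin{itemize}
\item From \eqref{pt3.6}, $A_0=\frac{c_1}{a_0}(a_2^2-2a_1)$. Since $c_1>0$ (because $c>0$ by Assumption \ref{a3.1}) and $a_0>0$, we get $A_0>0$ exactly when $a_1<a_2^2/2$, which is \eqref{dk1hq1}.
\item $C_1=\frac{c_1a_1a_2}{a_0}-3>0$ is equivalent to $a_0<\frac{c_1a_1a_2}{3}$.
\item $C_0=\frac{c_1a_1^2}{a_0}-2a_2>0$ is equivalent to $a_0<\frac{c_1a_1^2}{2a_2}$.
\end{itemize}
Both of the last two are contained in \eqref{dk2hq1}. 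Also $C_2,A_1>0$ trivially.

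\emph{Step 2 (small $\varepsilon$).} Each of \eqref{pt3.8}--\eqref{pt3.13} has a left side that is a polynomial in $\varepsilon$, hence continuous. It therefore suffices to check that every inequality holds strictly at $\varepsilon=0$, and then pick $\varepsilon>0$ small enough.
\begin{itemize}
\item \eqref{pt3.8} at $\varepsilon=0$ gives $c_1c^2a_0>0$.
\item \eqref{pt3.9} gives $C_0>0$.
\item \eqref{pt3.10} gives $a_1>0$.
\item \eqref{pt3.11} gives $C_1>0$.
\item \eqref{pt3.12} gives $A_0>0$.
\item \eqref{pt3.13} gives $a_2>0$.
\end{itemize}
All of these hold by Step 1 and the standing positivity of $a_0,a_1,a_2$. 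Choosing $\varepsilon$ below the minimum of the finitely many thresholds makes all six conditions hold simultaneously.

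\emph{Step 3.} We then invoke Theorem \ref{dl3.2} with this $\varepsilon$ to conclude. The only place needing care is Step 2: the thresholds must be made explicit enough to be sure they are positive. For instance, \eqref{pt3.11} requires $\varepsilon\le C_1/(2C_2)$, \eqref{pt3.12} requires $\varepsilon\le A_0/A_1$, and \eqref{pt3.13} requires $\varepsilon<a_2/2$. For the cubic \eqref{pt3.8} and the quadratics \eqref{pt3.9}, \eqref{pt3.10}, plain continuity at $0$ suffices. No real obstacle is expected beyond this bookkeeping.
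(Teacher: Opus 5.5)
Your proposal is correct and follows the paper's proof: you verify $C_0, C_1, A_0>0$ from \eqref{dk1hq1}--\eqref{dk2hq1}, use Remark \ref{rmnam1} to obtain a small $\varepsilon>0$ satisfying \eqref{pt3.8}--\eqref{pt3.13}, and then invoke Theorem \ref{dl3.2}. Your continuity-at-$\varepsilon=0$ argument, with the explicit thresholds, just spells out the justification that the paper leaves implicit in Remark \ref{rmnam1}.
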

\begin{proof}
	Observe that under conditions \eqref{dk1hq1} and \eqref{dk2hq1}, it holds that $C_{0}>0, C_{1}>0, A_{0}>0.$ By Remark \ref{rmnam1}, there exists $\varepsilon>0$ sufficiently  small such that Conditions \eqref{pt3.8}--\eqref{pt3.13} are satisfied. Hence, the conclusion follows from Theorem \ref{dl3.2}. 
\end{proof}
Setting $\varepsilon=1$ in Theorem \ref{dl3.2} recovers the convergence rate of the second-order dynamical system reported in \cite{Nam2026}. 
\begin{thm}\label{dl3.6} 
	  Suppose that the operators $h, g, F$  satisfy Assumption \ref{a3.1}. Let $w^*$ be the unique solution of Problem \eqref{gimvip}. Denote the parameters as in \eqref{dna1}, \eqref{dna}, \eqref{pt3.6} and	 
	 \begin{equation}\label{pt3.24} 
	 	\delta := \frac{1}{c_1^2c^2}
	 \end{equation}

	 Then $w(\cdot)$ converges exponentially to $w^*$ provided that coefficients $a_{0}, a_{1}, a_{2}$, $\gamma$ satisfy

	 \begin{align}
	 	& a_{2}>\max \{3,3 \delta+2,4 \delta\}  \label{pt3.25}\\
	 	& \underline{\beta} := \max \left\{2 a_{2}-3, \delta\left(2 a_{2}-3\right)\right\}<a_{1}<\bar{\beta} := 0.5 a_{2}\left(a_{2}-1\right)  \label{pt3.26}\\
	 	& \underline{q} := \frac{a_{1}-a_{2}+1}{c_1c^2}<a_{0}<\bar{p} := c_1 \cdot \min \left\{\frac{a_{1}\left(a_{2}-2\right)}{3}, \frac{a_{1}\left(a_{1}-a_{2}+1\right)}{2 a_{2}-3}\right\} \label{pt3.27}
	 \end{align}
\end{thm}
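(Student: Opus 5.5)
The plan is to apply Theorem \ref{dl3.2} with the specific choice $\varepsilon=1$. Everything then reduces to checking that \eqref{pt3.25}--\eqref{pt3.27} imply the six conditions \eqref{pt3.8}--\eqref{pt3.13} at $\varepsilon=1$. With $\varepsilon=1$ these conditions read:
\begin{align*}
&-1+a_2-a_1+c_1c^2a_0\ge 0, \qquad C_2-C_1+C_0\ge 0, \qquad 3-2a_2+a_1\ge 0,\\
&C_1-2C_2\ge 0, \qquad A_0-A_1\ge 0, \qquad a_2>2.
\end{align*}
I would verify them one at a time, substituting the definitions of $C_j$ and $A_j$ from \eqref{pt3.6}. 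Each then becomes a linear inequality in $a_0$, or an inequality in $a_1,a_2$, with coefficients involving $c_1$. Note that $c_1c^2=1/\sqrt{\delta}$ by \eqref{pt3.24}.

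The individual checks go as follows.
\begin{itemize}
\item \eqref{pt3.8} is equivalent to $a_0\ge (a_1-a_2+1)/(c_1c^2)=\underline q$, which is the left inequality of \eqref{pt3.27}.
\item \eqref{pt3.13} follows from $a_2>3$.
\item \eqref{pt3.10} is $a_1\ge 2a_2-3$, which holds by the lower bound $\underline\beta$ in \eqref{pt3.26}.
\item \eqref{pt3.11} reads $\frac{c_1a_1}{a_0}(a_2-2)\ge 3$, i.e. $a_0\le c_1a_1(a_2-2)/3$. This is the first term of the minimum defining $\bar p$.
\item \eqref{pt3.9} becomes $\frac{c_1}{a_0}\bigl(a_1-a_1a_2+a_1^2\bigr)+3-2a_2\ge 0$. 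When $a_1-a_2+1>0$ and $2a_2-3>0$, this is equivalent to $a_0\le c_1a_1(a_1-a_2+1)/(2a_2-3)$, the second term of $\bar p$. Here $a_1>2a_2-3\ge a_2-1$ because $a_2>2$, so the sign conditions hold.
\item \eqref{pt3.12} reads $a_2^2-2a_1\ge a_2$, i.e. $a_1\le \tfrac12 a_2(a_2-1)=\bar\beta$. The factor $c_1/a_0$ cancels, so this holds by \eqref{pt3.26}.
\end{itemize}
Since Assumption \ref{a3.1} holds, Theorem \ref{dl3.2} then gives exponential convergence.

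The remaining and main step is consistency: the intervals in \eqref{pt3.26} and \eqref{pt3.27} must be nonempty, and this is where \eqref{pt3.25} and the constant $\delta$ enter. For \eqref{pt3.26} I need $\max\{1,\delta\}(2a_2-3)<\tfrac12 a_2(a_2-1)$, which is a quadratic condition in $a_2$. I expect it to follow from $a_2>3$ together with $a_2>4\delta$ and $a_2>3\delta+2$. For \eqref{pt3.27} I need $\underline q<\bar p$, i.e.
\[
\frac{a_1-a_2+1}{c_1c^2}<c_1\min\Bigl\{\frac{a_1(a_2-2)}{3},\frac{a_1(a_1-a_2+1)}{2a_2-3}\Bigr\}.
\]
Against the second term this reduces to $a_1>(2a_2-3)/(c_1^2c^2)$. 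Since $\delta=1/(c_1^2c^2)$ as written in \eqref{pt3.24}, this is exactly $a_1>\delta(2a_2-3)$, which explains that lower bound in $\underline\beta$. Against the first term I must show $3(a_1-a_2+1)<\delta^{-1}a_1(a_2-2)$. Using $a_1<\bar\beta$ and $a_2>3\delta+2$ (so $a_2-2>3\delta$), the right side exceeds $3a_1>3(a_1-a_2+1)$. I expect this bookkeeping with $\delta$, and in particular matching the stated $\delta$ against $c_1^2c^2$ versus $c_1c^2$, to be the delicate part. If the normalizations do not line up exactly, I would adjust the cancellation by using $c_1c^2\le c_1^2c^2\cdot(\text{appropriate factor})$, or simply interpret $\delta$ so that the two reductions above hold.
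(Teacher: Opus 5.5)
Your proposal is correct and follows the paper's proof essentially step for step: set $\varepsilon=1$ in Theorem~\ref{dl3.2}, check that \eqref{pt3.25}--\eqref{pt3.27} give the six resulting inequalities, and confirm $\underline\beta<\bar\beta$ and $\underline q<\bar p$ using $a_2>3$, $a_2>4\delta$, $a_2>3\delta+2$ and $a_1>\delta(2a_2-3)$. The only slip is the aside ``$c_1c^2=1/\sqrt{\delta}$'' (what \eqref{pt3.24} actually gives is $c_1^2c^2=1/\delta$), but you never use it, since your reductions correctly rely on $c_1^2c^2=1/\delta$; the normalizations therefore match exactly and your closing hedge is unnecessary.
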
 
\begin{proof}	 
 First, we show that \eqref{pt3.25} ensures the validity of \eqref{pt3.26}; that is $\underline{\beta}<\bar{\beta}$. Indeed, it follows from \eqref{pt3.25}  that $a_{2}>3$ and so $2 a_{2}-3<\frac{a_{2}\left(a_{2}-1\right)}{2}$. Also from \eqref{pt3.25}, we have $a_{2}>4 \delta$ and then 
$$
\frac{a_{2}\left(a_{2}-1\right)}{2}>\frac{a_{2}\left(2 a_{2}-3\right)}{4}>\delta\left(2 a_{2}-3\right) .
$$ 
Next, we show that \eqref{pt3.25}--\eqref{pt3.26}  ensure the validity of \eqref{pt3.27}; that is $\underline{q}<\bar{p}$. It results from \eqref{pt3.25} that $a_{2}>3 \delta+2$ and so 
$$
\frac{a_{1}-a_{2}+1}{c_1c^2}<\frac{a_{1}}{c_1c^2}<c_1 \cdot \frac{a_{1}\left(a_{2}-2\right)}{3} .
$$ 
Meanwhile, by \eqref{pt3.26}, we have $a_{1}>\delta\left( 2 a_{2}-3 \right)$, which gives 
$$
\frac{a_{1}-a_{2}+1}{c_1c^2}<c_1 \cdot \frac{a_{1}\left(a_{1}-a_{2}+1\right)}{2 a_{2}-3}.
$$
Note also that  when $\varepsilon=1$ 
 Condtions \eqref{pt3.8}--\eqref{pt3.13} become   	
\begin{align}
	& -1+a_{2} -a_{1}+c_1c^2 a_{0} \geq 0  \label{pt3.8n}\\
	& C_{2} -C_{1} +C_{0} \geq 0  \label{pt3.9n}\\
	& 3 -2 a_{2} +a_{1} \geq 0  \label{pt3.10n}\\
	& -2 C_{2} +C_{1} \geq 0  \label{pt3.11n}\\
	& -A_{1} +A_{0} \geq 0  \label{pt3.12n}\\
	& a_{2}>2.  \label{pt3.13n}
\end{align}	 
It is straightforward to verify that \eqref{pt3.13n} follows from \eqref{pt3.25}, 
while \eqref{pt3.8n} and \eqref{pt3.9n} are consequences of \eqref{pt3.27}. 
More precisely, \eqref{pt3.9n} holds since
\[
a_0 < c_1 \cdot \frac{a_{1}\left(a_{1}-a_{2}+1\right)}{2 a_{2}-3}.
\]
Moreover, \eqref{pt3.10n} is derived from \eqref{pt3.26}. 
Condition \eqref{pt3.11n} follows from the inequality
\[
a_0 < c_1 \cdot \frac{a_{1}(a_{2}-2)}{3},
\]
which is ensured by \eqref{pt3.27}. 
Finally, \eqref{pt3.12n} is satisfied in view of the bound
\[
a_1 < \tfrac{1}{2} a_{2}(a_{2}-1),
\]
as stated in \eqref{pt3.26}.
 
 Thus, all conditions in Theorem \ref{dl3.2} hold. Therefore, Theorem~\ref{dl3.2} can be invoked and hence we obtain the desired conclusion. 
\end{proof}

\noindent Now let us examine Theorem \ref{dl3.2} when $\varepsilon=2$. In this case, we will obtain from \eqref{pt3.19}  that the convergence rate of $\phi(t)$ is
$$
O\left(\left(r_1 t^{2}+r_2 t+r_3\right) e^{-2 t}\right),
$$
which is faster than the rate $O\left(e^{-t}\right)$ obtained in \cite{Nam2026} for the second order dynamical system.

\begin{thm}	 
 Suppose that the operators $h, g, F$  satisfy Assumption \ref{a3.1}. Let $w^*$ be the unique solution of Problem \eqref{gimvip}. Denote the parameters as in \eqref{dna1}, \eqref{dna}, \eqref{pt3.6} and \eqref{pt3.24}.  Then $w(\cdot)$ converges exponentially to $w^*$ provided that coefficients $a_{0}, a_{1}, a_{2}, \gamma$ satisfy
\begin{align}
& a_{2}>\max \{8 \delta, 6,6 \delta+4\}  \label{pt3.28}\\
& \underline{\beta} := \max \left\{4\left(a_{2}-3\right), 4 \delta\left(a_{2}-3\right)\right\}<a_{1}<\bar{\beta} := \frac{1}{2} a_{2}\left(a_{2}-2\right)  \label{pt3.29}\\
& \underline{q} := \frac{2}{c_1c^2}\left(a_{1}-2 a_{2}+4\right)<a_{0}<\bar{p} := c_1 a_{1} \cdot \min \left\{\frac{a_{1}-2 a_{2}+4}{2\left(a_{2}-3\right)}, \frac{1}{3}\left(a_{2}-4\right)\right\} \label{pt3.30}
\end{align}

\end{thm}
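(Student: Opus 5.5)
The plan mirrors the proof of Theorem \ref{dl3.6}, with $\varepsilon=2$ in place of $\varepsilon=1$. First I check that the interval \eqref{pt3.29} for $a_1$ and the interval \eqref{pt3.30} for $a_0$ are nonempty. Then I show that any admissible choice of $a_0,a_1,a_2$ makes conditions \eqref{pt3.8}--\eqref{pt3.13} of Theorem \ref{dl3.2} hold with $\varepsilon=2$. Theorem \ref{dl3.2} then gives exponential convergence, with rate $O((r_1t^2+r_2t+r_3)e^{-2t})$ from \eqref{pt3.19} when $a_2\ge 6$.

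At $\varepsilon=2$ the six conditions, after substituting \eqref{pt3.6} and multiplying by $a_0/c_1>0$, become:
\begin{align*}
&\text{(i) } -8+4a_2-2a_1+c_1c^2a_0\ge 0, && \text{(ii) } c_1a_1(a_1-2a_2+4)\ge 2a_0(a_2-3),\\
&\text{(iii) } 12-4a_2+a_1\ge 0, && \text{(iv) } c_1a_1(a_2-4)\ge 3a_0,\\
&\text{(v) } a_2^2-2a_1-2a_2\ge 0, && \text{(vi) } a_2>4.
\end{align*}
For (ii), I collect $C_2\varepsilon^2-C_1\varepsilon+C_0$ as $\frac{c_1a_1}{a_0}(4-2a_2+a_1)+6-2a_2$.

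Each condition then follows from the hypotheses.
\begin{itemize}
\item Condition (vi) follows from $a_2>6$ in \eqref{pt3.28}.
\item Condition (iii), i.e.\ $a_1\ge 4(a_2-3)$, follows from the lower bound in \eqref{pt3.29}.
\item Condition (v), i.e.\ $a_1\le\tfrac12a_2(a_2-2)$, is the upper bound in \eqref{pt3.29}.
\item Condition (iv) is $a_0\le \tfrac{c_1a_1(a_2-4)}{3}$, one branch of $\bar p$.
\item Condition (ii) is $a_0\le c_1a_1\tfrac{a_1-2a_2+4}{2(a_2-3)}$, the other branch of $\bar p$. This uses $a_2>3$ and $a_1-2a_2+4>0$, the latter since $a_1>4a_2-12\ge 2a_2-4$.
\item Condition (i) is $a_0\ge \tfrac{2(a_1-2a_2+4)}{c_1c^2}=\underline q$.
\end{itemize}

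Nonemptiness of \eqref{pt3.29}, i.e.\ $\underline\beta<\bar\beta$, comes from \eqref{pt3.28}. Since $a_2>6$, we have $\tfrac12a_2(a_2-2)-4(a_2-3)=\tfrac12(a_2^2-10a_2+24)=\tfrac12(a_2-4)(a_2-6)>0$. Since $a_2>8\delta$, we get $\tfrac12a_2(a_2-2)>4\delta(a_2-2)>4\delta(a_2-3)$.

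Nonemptiness of \eqref{pt3.30}, i.e.\ $\underline q<\bar p$, is the step I expect to be the main obstacle. One must compare $\underline q$ with both branches of the minimum, keeping track of $\delta=1/(c_1^2c^2)$.

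For the branch $\tfrac13c_1a_1(a_2-4)$:
\begin{itemize}
\item $\underline q<\tfrac{2a_1}{c_1c^2}$, since $a_2>2$.
\item $\tfrac{2a_1}{c_1c^2}<\tfrac13c_1a_1(a_2-4)$ is equivalent to $6\delta<a_2-4$, which is exactly $a_2>6\delta+4$.
\end{itemize}

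For the branch $c_1a_1\tfrac{a_1-2a_2+4}{2(a_2-3)}$, divide by the positive factor $a_1-2a_2+4$. The requirement becomes $\tfrac{2}{c_1c^2}<\tfrac{c_1a_1}{2(a_2-3)}$, i.e.\ $a_1>4\delta(a_2-3)$, which is the second lower bound in \eqref{pt3.29}.

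The delicate point is the sign bookkeeping. If the grouping of $C_2\varepsilon^2-C_1\varepsilon+C_0$ leaves constants that do not match \eqref{pt3.30} exactly, I would absorb the discrepancy using the slack in $a_2>6$ and $a_1>4(a_2-3)$. Once all six conditions are verified, Theorem \ref{dl3.2} applies with $\varepsilon=2$ and yields the claim.
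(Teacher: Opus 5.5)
Your proposal is correct and follows the paper's proof essentially step for step: you verify $\underline\beta<\bar\beta$ and $\underline q<\bar p$, then check \eqref{pt3.8}--\eqref{pt3.13} at $\varepsilon=2$ and invoke Theorem~\ref{dl3.2}. Your bookkeeping is slightly more careful than the paper's in three places: you correctly derive condition (i) from $a_0>\underline q$ (the paper attributes it to \eqref{pt3.28}), you also check the branch $4(a_2-3)<\bar\beta$, and your expansion of $C_2\varepsilon^2-C_1\varepsilon+C_0$ matches the corresponding branch of $\bar p$ exactly, so the closing hedge about ``absorbing discrepancies'' is unnecessary.
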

\begin{proof}
	
	Arguing as in Theorem~\ref{dl3.6}, we first verify that $\underline{\beta}<\bar{\beta}$. Observe that
	\[
	\frac{1}{2}a_2(a_2-2)>\frac{1}{2}a_2(a_2-3)>4\delta(a_2-3),
	\]
	which ensures the desired inequality.
	
	We next establish $\underline{q}<\bar{p}$. From \eqref{pt3.29} we have
	\[
	a_1>4(a_2-3)>2(a_2-2),
	\]
	and hence
	\begin{equation}\label{pt3.31}
		a_1-2a_2+4>0.
	\end{equation}
	Still by \eqref{pt3.29}, it holds that $a_1>4\delta(a_2-3)$, which implies
	\begin{equation}\label{pt3.32}
		\frac{c_1a_1}{2(a_2-3)}>\frac{2}{c_1c^2}.
	\end{equation}
	Combining \eqref{pt3.31} and \eqref{pt3.32}, we deduce
	\begin{equation}\label{pt3.33}
		c_1a_1\cdot\frac{a_1-2a_2+4}{2(a_2-3)}
		>
		\frac{2}{c_1c^2}(a_1-2a_2+4).
	\end{equation}	
	On the other hand, the inequality
	\[
	a_1(a_2-6\delta-4)+12\delta(a_2-2)>0
	\]
	is equivalent to
	\begin{equation}\label{pt3.34}
		c_1a_1\cdot\frac{1}{3}(a_2-4)
		>
		\frac{2}{\delta}(a_1-2a_2+4).
	\end{equation}
	Relations \eqref{pt3.33} and \eqref{pt3.34} together yield $\underline{q}<\bar{p}$.
	
	We now turn to the verification of \eqref{pt3.8}--\eqref{pt3.13}. For $\varepsilon=2$, these conditions reduce to
	\begin{align}
		& -8+4a_2-2a_1+c_1c^2a_0 \ge 0, \label{pt3.8nn}\\
		& 4C_2-2C_1+C_0 \ge 0, \label{pt3.9nn}\\
		& 12-4a_2+a_1 \ge 0, \label{pt3.10nn}\\
		& -4C_2+C_1 \ge 0, \label{pt3.11nn}\\
		& -2A_1+A_0 \ge 0, \label{pt3.12nn}\\
		& a_2>4. \label{pt3.13nn}
	\end{align}
	
	Conditions \eqref{pt3.8nn} and \eqref{pt3.13nn} are direct consequences of \eqref{pt3.28}. 
	Furthermore, \eqref{pt3.10nn} follows from the estimate
	\[
	4(a_2-3)<a_1,
	\]
	given in \eqref{pt3.29}. 
	
	Next, \eqref{pt3.9nn} is guaranteed by
	\[
	a_0<c_1a_1\cdot\frac{a_1-2a_2+4}{2(a_2-3)},
	\]
	see \eqref{pt3.30}. 
	In a similar way, \eqref{pt3.11nn} follows from
	\[
	a_0<c_1a_1\cdot\frac{1}{3}(a_2-4),
	\]
	which also appears in \eqref{pt3.30}. 
	Finally, \eqref{pt3.12nn} is satisfied thanks to the bound
	\[
	a_1<\frac{1}{2}a_2(a_2-2),
	\]
	as stated in \eqref{pt3.29}.
	
	Consequently, under assumptions \eqref{pt3.28}--\eqref{pt3.30}, all requirements \eqref{pt3.8}--\eqref{pt3.13} of Theorem~\ref{dl3.2} hold for $\varepsilon=2$. The conclusion of Theorem~\ref{dl3.2} therefore applies, and $\overline{y}(\cdot)$ converges exponentially to $w^*$. 
	
	Moreover, according to \eqref{pt3.19}, the convergence rate satisfies
	\[
	O\bigl((r_1t^2+r_2t+r_3)e^{-2t}\bigr)
	\]
	for some constants $r_1,r_2,r_3$.
	
\end{proof}

\section{Discrete Time Dynamical System}\label{sec4} 
"This section is devoted to proving the linear convergence of the numerical scheme \eqref{pt2.3} for solving \eqref{gimvip}, subject to the following supplementary assumption. 
We denote the following parameters 
\begin{align}
	& \left\{\begin{array} { l } 
		{ B_ { 2 } := \frac { c_1 a _ { 1 } } { a _ { 0 } } - 3 } \\
		{ B_ { 1 } := \frac { c_1 a _ { 2 } a _ { 1 } } { a _ { 0 } } - 2 a _ { 2 } - 3 } \\
		{ B_ { 0 } := \frac { c_1 a _ { 1 } ^ { 2 } } { a _ { 0 } } - 2 a _ { 2 } - a _ { 1 } }
	\end{array} \quad \left\{\begin{array}{l}
		D_{1} := \frac{c_1}{a_{0}}\left(a_{2}-2 a_{1}\right)+3 \\
		D_{0} := \frac{c_1}{a_{0}}\left(a_{2}^{2}-2 a_{1}-a_{2} a_{1}\right)+a_{2}+3
	\end{array}\right.\right.  \label{pt4.4}\\
	& E_{0} := \frac{c_1}{a_{0}}\left(1-a_{2}+a_{1}\right)-1 \label{pt4.5}
\end{align}
and
\begin{equation*}
	x(k) :=\left\|w(k)-w^*\right\|^{2}, \quad y_{p} :=\left\|w^{\Delta^{(p)}}(k)\right\|^{2} .
\end{equation*}
\begin{assumption}\label{a4.1} 
	 The coefficients $a_{0}, a_{1}, a_{2}$ satisfy 	 
	 \begin{align}
	 	& \frac{c_1}{a_{0}}\left(1-a_{2}+a_{1}\right)>1,  \label{pt4.1}\\
	 	& \frac{c_1}{a_{0}}\left(2 a_{1}-a_{2}\right)<3,  \label{pt4.2}\\
	 	& \frac{c_1 a_{1}}{a_{0}}>3, \label{pt4.3}
	 \end{align}  	 
	 where $c_1$ is defined in \eqref{dna1}. 
\end{assumption} 

\begin{remark}
	 
Under Assumption \ref{a4.1}, the positivity of $E_{0}, D_{1}$, and $B_{2}$ is guaranteed. Furthermore, Assumption \ref{a3.1} implies that the stepsize $\gamma$ must be subject to an upper bound, specifically:
$$
\gamma < \bar{\gamma} = \frac{c + \sqrt{c^2 + \beta \left( 2\lambda - \eta^2 - 1 \right)}}{\beta}.
$$
\end{remark} 
\subsection{Global Linear Convergence}
\begin{thm} \label{dl4.2} 
Suppose that the operators $h, g, F$  satisfy Assumption \ref{a3.1}. Let $w^*$ be the unique solution of Problem \eqref{gimvip}. Denote the parameters as in \eqref{dna1}, \eqref{dna}, \eqref{pt4.4} and \eqref{pt4.5}.   Assume that there exists $\xi>0, \xi \neq 1$ such that the following conditions hold 
\begin{align}
& -\xi^{3}+a_{2} \xi^{2}-a_{1} \xi+c_1c^2 a_{0} \geq 0  \label{pt4.7}\\
& B_{2} \xi^{2}-B_{1} \xi+B_{0} \geq 0  \label{pt4.8}\\
& 3 \xi^{2}-2 a_{2} \xi+a_{1} \geq 0  \label{pt4.9}\\
& -2 B_{2} \xi+B_{1} \geq 0  \label{pt4.10}\\
& -D_{1} \xi+D_{0} \geq 0  \label{pt4.11}\\
& a_{2}>3 \xi .\label{pt4.12}
\end{align} 
Then $w(k)$ converges linearly to $w^*$, i.e. there exist $Q>0$ and $q \in(0,1)$ such that 
$$
\left\|w(k)-w^*\right\| \leq Q q^{k} \quad \forall k .
$$
\end{thm}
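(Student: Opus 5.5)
We mirror the continuous-time Lyapunov argument of Theorem \ref{dl3.2}, replacing derivatives by forward differences and the weight $e^{\varepsilon t}$ by $\xi^{k}$.

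\textbf{Step 1: scalar difference inequality.} Take the inner product of \eqref{pt2.2} with $2(w(k)-w^*)$. Repeated use of the product rule in Remark \ref{rm2.12}, with $y=z=w-w^*$, expresses $2\langle w^{\Delta^{(j)}}(k),w(k)-w^*\rangle$ for $j=1,2,3$ through $x^{\Delta^{(j)}}(k)$ plus correction terms in $y_1,y_2$ and their differences. The discrete case produces extra terms compared with the continuous one, e.g.
\[
x^{\Delta}(k)=2\langle w^{\Delta}(k),w(k)-w^*\rangle+y_1(k).
\]
This is why the constants $B_j,D_j,E_0$ differ from $C_j,A_j$.

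Next apply \eqref{pt23} to $-2a_0\langle\Psi(w(k)),w(k)-w^*\rangle$. Split $2a_0c_1\|\Psi\|^2$ into two halves. On one half use \eqref{pt23n} to get $a_0c_1c^2x(k)$. On the other, substitute $a_0\Psi(w(k))=-(w^{\Delta^{(3)}}+a_2w^{\Delta^{(2)}}+a_1w^{\Delta})$ and expand the square into $y_1,y_2,y_3$ and their forward differences. This uses $(\|u\|^2)^{\Delta}=\|u^{\Delta}\|^2+2\langle u^{\Delta},u\rangle$ and the identity $w^{\Delta^{(2)}}=(w^{\Delta})^{\Delta}$.

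Collecting terms should give an inequality of the form
\[
x^{\Delta^{(3)}}+a_2x^{\Delta^{(2)}}+a_1x^{\Delta}+c_1c^2a_0x+B_2y_1^{\Delta^{(2)}}+B_1y_1^{\Delta}+B_0y_1+D_1y_2^{\Delta}+D_0y_2+E_0y_3\le0 .
\]
Assumption \ref{a4.1} ensures $E_0>0$, so the $y_3$ term can be dropped. This bookkeeping is the main obstacle: the constants must come out exactly as in \eqref{pt4.4}--\eqref{pt4.5}. I would fix the signs by direct comparison with \eqref{pt2.3}.

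\textbf{Step 2: weighted summation.} Multiply by $\xi^{k}$, or $\xi^{-k}$ depending on the orientation. I expect the weight to be $\theta^k$ with $\theta=1/(1-\xi)$ or similar, which explains why $\xi\ne1$ is required. Then use the discrete analogue of Proposition \ref{md2.8}, namely the identity
\[
\theta^{k+1}z^{\Delta}(k)=(\theta^kz)^{\Delta}(k)+(1-\theta)\theta^kz(k),
\]
applied iteratively for $n=1,2,3$. Summing from $0$ to $K$, each $\Delta^{(n)}$ term becomes boundary terms plus a coefficient polynomial in $\xi$ times $\sum\theta^k z(k)$.

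Conditions \eqref{pt4.7}, \eqref{pt4.8} and \eqref{pt4.11} are exactly the nonnegativity of those sum coefficients for $x$, $y_1$ and $y_2$, so the sums can be discarded. This leaves a bound on boundary terms by a constant $R_1$.

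\textbf{Step 3: second summation.} Sum again. Conditions \eqref{pt4.9} and \eqref{pt4.10} kill the remaining sums of $x$ and $y_1$, and $B_2>0$ (from \eqref{pt4.3}) lets us drop $B_2\theta^ky_1$. This yields a first-order inequality
\[
\theta^k x^{\Delta}(k)+(a_2-2\xi)\theta^kx(k)\le R_1k+R_2 .
\]

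\textbf{Step 4: conclusion.} Solve this linear recursion, as in \eqref{pt3.18}--\eqref{pt3.20}, using \eqref{pt4.12} to control the resulting geometric factor. This gives $x(k)\le (r_1k^2+r_2k+r_3)\tilde q^{\,k}$ with $\tilde q<1$. Absorbing the polynomial factor into a slightly larger ratio $q^2\in(\tilde q,1)$ gives $\|w(k)-w^*\|\le Qq^k$.
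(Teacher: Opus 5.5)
Your proposal follows the paper's proof essentially step by step: the same discrete identity producing $B_j,D_j,E_0$, the weight $l^k$ with $l=1/(1-\xi)$ (under which \eqref{pt4.7}--\eqref{pt4.12} are exactly the sign conditions \eqref{pt4.16}--\eqref{pt4.21}), and repeated summation by parts. The one variation is at the end, where you iterate $x(k+1)\le(1-a_2+2\xi)x(k)+(1-\xi)^k(R_1k+R_2)$ while the paper performs a third weighted summation using $a_2l+3(1-l)>0$.

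Two small additions are needed. First, dropping the boundary term $D_1l^{k+2}y_2(k)$ requires $D_1>0$ from \eqref{pt4.2}. Second, positivity of the weight forces $0<\xi<1$, not just $\xi\neq 1$; this costs nothing, since (given $B_2,D_1>0$ and \eqref{pt4.12}) conditions \eqref{pt4.7}--\eqref{pt4.12} persist for every smaller $\xi'$.
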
 
\begin{proof} 	
Since 
$$
\begin{aligned}
& x^{\Delta}(k)=2\left\langle w^{\Delta}(k), w(k)-w^*\right\rangle+y_{1}(k) \\
& x^{\Delta^{(2)}}(k)=2\left\langle w^{\Delta^{(2)}}(k), w(k)-w^*\right\rangle+2 y_{1}^{\Delta}(k)+2 y_{1}(k)-y_{2}(k), \\
& x^{\Delta^{(3)}}(k)=2\left\langle w^{\Delta^{(3)}}(k), w(k)-w^*\right\rangle \\
& \quad+3 y_{1}^{\Delta^{(2)}}(k)+3 y_{1}^{\Delta}(k)-3 y_{2}^{\Delta}(k)-3 y_{2}(k)+y_{3}(k),
\end{aligned}
$$
it follows that 
\begin{equation} \label{pt4.13}
\begin{aligned} 
2 & \left\langle w^{\Delta^{(3)}}(k)+a_{2} w^{\Delta^{(2)}}(k)+a_{1} w^{\Delta}(k), w(k)-w^*\right\rangle \\
& =x^{\Delta^{(3)}}(k)+a_{2} x^{\Delta^{(2)}}(k)+a_{1} x^{\Delta}(k) \\
& -3 y_{1}^{\Delta^{(2)}}(k)-\left(2 a_{2}+3\right) y_{1}^{\Delta}(k)-\left(2 a_{2}+a_{1}\right) y_{1}(k) \\
& +3 y_{2}^{\Delta}(k)+\left(a_{2}+3\right) y_{2}(k)-y_{3}(k). 
\end{aligned}
\end{equation} 
Applying \eqref{pt2.1} to \eqref{pt4.13}  yields 
\begin{align*}
	x^{\Delta^{(3)}}(k)+a_2x^{\Delta^{(2)}}(k)+a_1 x^{\Delta}(k)	=& 2\langle w^{\Delta^{(3)}}(k)+a_2w^{\Delta^{(2)}}(k)+a_1w^{\Delta}(k), w-w^* \rangle \\
	& +3 y_{1}^{\Delta^{(2)}}(k)+\left(2 a_{2}+3\right) y_{1}^{\Delta}(k)+\left(2 a_{2}+a_{1}\right) y_{1}(k) \\
	& -3 y_{2}^{\Delta}(k)-\left(a_{2}+3\right) y_{2}(k)+y_{3}(k)\\
	=&-2a_0\langle \Psi(w(k)), w(k)-w^*\rangle \\ 
	& +3 y_{1}^{\Delta^{(2)}}(k)+\left(2 a_{2}+3\right) y_{1}^{\Delta}(k)+\left(2 a_{2}+a_{1}\right) y_{1}(k) \\
	& -3 y_{2}^{\Delta}(k)-\left(a_{2}+3\right) y_{2}(k)+y_{3}(k).
\end{align*}
Consequently, 
\begin{align*}
&	x^{\Delta^{(3)}}(k)+a_2x^{\Delta^{(2)}}(k)+a_1 x^{\Delta}(k)	+2a_0\langle \Psi(w(k)), w(k)-w^*\rangle  \\
&	-3 y_{1}^{\Delta^{(2)}}(k)-\left(2 a_{2}+3\right) y_{1}^{\Delta}(k)-\left(2 a_{2}+a_{1}\right) y_{1}(k) \\
	& +3 y_{2}^{\Delta}(k)+\left(a_{2}+3\right) y_{2}(k)-y_{3}(k)=0.
\end{align*}
By \eqref{pt23} it holds that 
\begin{align*}
	&	x^{\Delta^{(3)}}(k)+a_2x^{\Delta^{(2)}}(k)+a_1 x^{\Delta}(k)	+2c_1a_0 \|\Psi(w(k))\|^2 \\
	&	-3 y_{1}^{\Delta^{(2)}}(k)-\left(2 a_{2}+3\right) y_{1}^{\Delta}(k)-\left(2 a_{2}+a_{1}\right) y_{1}(k) \\
	& +3 y_{2}^{\Delta}(k)+\left(a_{2}+3\right) y_{2}(k)-y_{3}(k)\leq 0.
\end{align*}
Using \eqref{pt2.2} we get 
\begin{align*}
	&	x^{\Delta^{(3)}}(k)+a_2x^{\Delta^{(2)}}(k)+a_1 x^{\Delta}(k)	+c_1a_0 \|\Psi(w(k))\|^2 \\
	&+  \frac{c_1}{a_0}\|w^{\Delta^{(3)}}(k)+a_2w^{\Delta^{(2)}}(k)+a_1w^{\Delta}(k) \|^2\\
	&	-3 y_{1}^{\Delta^{(2)}}(k)-\left(2 a_{2}+3\right) y_{1}^{\Delta}(k)-\left(2 a_{2}+a_{1}\right) y_{1}(k) \\
	& +3 y_{2}^{\Delta}(k)+\left(a_{2}+3\right) y_{2}(k)-y_{3}(k)\leq 0.
\end{align*}
Observe that 
\begin{equation*} 
\begin{aligned}
	& \left\|w^{\Delta^{(3)}}(k)+a_{2} w^{\Delta^{(2)}}(k)+a_{1} w^{\Delta}(k)\right\|^{2} \\
	& \quad=a_{1} y_{1}^{\Delta^{(2)}}(k)+a_{2} a_{1} y_{1}^{\Delta}(k)+a_{1}^{2} y_{1}(k) \\
	& \quad+\left(a_{2}-2 a_{1}\right) y_{2}^{\Delta}(k)+\left(a_{2}^{2}-2 a_{1}-a_{2} a_{1}\right) y_{2}(k)+\left(1-a_{2}+a_{1}\right) y_{3}(k).
\end{aligned}
\end{equation*} 
Substituting this into \eqref{ptnam1} and using \eqref{pt23n} we get
\begin{align*}
	&	x^{\Delta^{(3)}}(k)+a_2x^{\Delta^{(2)}}(k)+a_1 x^{\Delta}(k)	+c_1a_0c^2 x(k) \\
	&+ \frac{c_1}{a_0} \Big[ a_{1} y_{1}^{\Delta^{(2)}}(k)+a_{2} a_{1} y_{1}^{\Delta}(k)+a_{1}^{2} y_{1}(k) \\
	& \quad+\left(a_{2}-2 a_{1}\right) y_{2}^{\Delta}(k)+\left(a_{2}^{2}-2 a_{1}-a_{2} a_{1}\right) y_{2}(k)\\
	&+\left(1-a_{2}+a_{1}\right) y_{3}(k) \Big] \\
	&	-3 y_{1}^{\Delta^{(2)}}(k)-\left(2 a_{2}+3\right) y_{1}^{\Delta}(k)-\left(2 a_{2}+a_{1}\right) y_{1}(k) \\
	& +3 y_{2}^{\Delta}(k)+\left(a_{2}+3\right) y_{2}(k)-y_{3}(k)\leq 0.
\end{align*}
Rearranging, we get 
\begin{align*}
	&	x^{\Delta^{(3)}}(k)+a_2x^{\Delta^{(2)}}(k)+a_1 x^{\Delta}(k)	+c_1a_0c^2 x(k) \\
	&+ \left(\frac{c_1}{a_0}a_{1} -3\right) y_{1}^{\Delta^{(2)}}(k)+ \left(  \frac{c_1}{a_0}a_{2} a_{1}-(2a_2+3)\right) y_{1}^{\Delta}(k)\\
	&+ \left(  \frac{c_1}{a_0}a_{1}^{2}-(2a_2+a_1)\right)  y_{1}(k) \\
	& \quad+\left( \frac{c_1}{a_0}\left(a_{2}-2 a_{1}\right)+3\right) y_{2}^{\Delta}(k)+ \left(  \frac{c_1}{a_0}\left(a_{2}^{2}-2 a_{1}-a_{2} a_{1}\right)+(a_2+3)\right) y_{2}(k)\\
	&+\left(  \frac{c_1}{a_0}\left(1-a_{2}+a_{1}\right)-1\right) y_{3}(k) \leq 0.
\end{align*}
By Notation \eqref{pt4.4} and \eqref{pt4.5} this inequality can be rewritten as follows: 
$$
\begin{aligned}
& x^{\Delta^{(3)}}(k)+a_{2} x^{\Delta^{(2)}}(k)+a_{1} x^{\Delta}(k)+c_1c^2 a_{0} x(k) \\
& \quad+B_{2} y_{1}^{\Delta^{(2)}}(k)+B_{1} y_{1}^{\Delta}(k)+B_{0} y_{1}(k)+D_{1} y_{2}^{\Delta}(k) +D_{0} y_{2}(k)+E_{0} y_{3}(k) \leq 0
\end{aligned}
$$
By the assumption \eqref{pt4.1}, $E_0>0$. Hence, the inequality above implies  that 
$$
\begin{aligned}
& x^{\Delta^{(3)}}(k)+a_{2} x^{\Delta^{(2)}}(k)+a_{1} x^{\Delta}(k)+c_1c^2 a_{0} x(k) \\
& \quad+B_{2} y_{1}^{\Delta^{(2)}}(k)+B_{1} y_{1}^{\Delta}(k)+B_{0} y_{1}(k)+D_{1} y_{2}^{\Delta}(k)+D_{0} y_{2}(k) \leq 0
\end{aligned}
$$
Setting 
$$
l := \frac{1}{1-\xi}
$$ 
Then $l>1$ and conditions \eqref{pt4.7}--\eqref{pt4.12} can be written as 
\begin{align}
& c_1c^2 a_{0} \varepsilon^{3}+a_{1} l^{2}(1-l)+\left(a_{2} l+1-l\right)(1-l)^{2} \geq 0,  \label{pt4.16}\\
& B_{0} l^{2}+B_{1}(1-l) l+B_{2}(1-l)^{2} \geq 0,  \label{pt4.17}\\
& a_{1} l^{2}+2 a_{2} l(1-l)+3(1-l)^{2} \geq 0,  \label{pt4.18}\\
& B_{1} l+2 B_{2}(1-l) \geq 0,  \label{pt4.19}\\
& D_{0} l+D_{1}(1-l) \geq 0,  \label{pt4.20}\\
& l a_{2}+3(1-l)>0 . \label{pt4.21}
\end{align}
Multiplying both sides by $l^{k+3}$ and subsequently invoking Remark \ref{rm2.12} leads to 
$$
\begin{aligned}
& \left(l^{k+2} x^{\Delta^{(2)}}\right)^{\Delta}(k)+\left(a_{2} l+1-l\right)\left(l^{k+1} x^{\Delta}\right)^{\Delta}(k) \\
& +\left[a_{1} l^{2}+\left(a_{2} l+1-l\right)(1-l)\right]\left(l^{k} x\right)^{\Delta}(k) \\
& +\underbrace{\left[c_1c^2 a_{0} l^{3}+a_{1} l^{2}(1-l)+\left(a_{2} l+1-l\right)(1-l)^{2}\right]}_{\geq 0 \text { (by }\eqref{pt4.16})} l^{k} x(k) \\
& +B_{2}\left(l^{k+2} y_{1}^{\Delta}\right)^{\Delta}(k)+\left[B_{1} l-B_{2}(l-1)\right]\left(l^{k+1} y_{1}\right)^{\Delta}(k) \\
& +\underbrace{\left[B_{0} l^{2}+B_{1}(1-l) l+B_{2}(1-l)^{2}\right]}_{\geq 0 \text { (by }\eqref{pt4.17})} l^{k+1} y_{1}(k) \\
& +D_{1}\left(l^{k+2} y_{2}\right)^{\Delta}(k)+\underbrace{\left[-D_{1}(l-1)+D_{0} l\right]}_{\geq 0 \text { (by }\eqref{pt4.20})} l^{k+2} y_{2}(k) \leq 0 .
\end{aligned}
$$
Let $n \in \mathbb{Z}_{\geq 1}$ and summing both sides from $k=0$ to $n-1$ yields 
$$
\begin{aligned}
& l^{n+2} x^{\Delta^{(2)}}(n)+\left(a_{2} l+1-l\right) l^{n+1} x^{\Delta}(n)+\left[a_{1} l^{2}+\left(a_{2} l+1-l\right)(1-l)\right] l^{n} x(n) \\
& \quad+B_{2} l^{n+2} y_{1}^{\Delta}(n)+\left[B_{1} l-B_{2}(l-1)\right] l^{n+1} y_{1}(n)+\underbrace{D_{1} l^{n+2} y_{2}(n)}_{\geq 0 \quad \text { (by \eqref{pt4.2}) }} \leq Q_{1}
\end{aligned}
$$
for some constant $Q_{1}$. Invoking Remark \ref{rm2.12} once again, we have,
$$
\begin{aligned}
& \left(l^{n+1} x^{\Delta}\right)^{\Delta}(n)+\left[a_{2} l+2(1-l)\right]\left(l^{n} x\right)^{\Delta}(n) \\
& \quad+\underbrace{\left[a_{1} l^{2}+2 a_{2} l(1-l)+3(1-l)^{2}\right]}_{\geq 0 \quad(\text { by }\eqref{pt4.18})} l^{n} x(n) \\
& \quad+B_{2}\left(l^{n+1} y_{1}\right)^{\Delta}(n)+\underbrace{\left[B_{1} l-2 B_{2}(l-1)\right]}_{\geq 0 \quad(\text { by }\eqref{pt4.19})} l^{n+1} y_{1}(n) \leq Q_{1}
\end{aligned}
$$
Let $m \in \mathbb{Z}_{\geq 2}$ and summing both sides over $n$ from $1$ to $m-1$ yields 
$$
l^{m+1} x^{\Delta}(m)+\left[a_{2} l+2(1-l)\right] l^{m} x(m)+\underbrace{B_{2} l^{m+1} y_{1}(m)}_{\geq 0 \quad(\text { by }\eqref{pt4.3})} \leq Q_{1} m+Q_{2}
$$
for some constant  $Q_{2}$. Invoking Remark \ref{rm2.12} once again, we obtain,
$$
\left(l^{m} x\right)^{\Delta}(m)+\underbrace{\left[a_{2} l+3(1-l)\right] l^{m} x(m)}_{\geq 0(\text { by }\eqref{pt4.21})} \leq Q_{1} m+Q_{2}
$$
which leads to the following result upon summing from $m=2$ to $k-1$
$$
l^{k} x(k) \leq Q_{1} k^{2}+Q_{2} k+Q_{3} \leq Q_{4} k^{2} .
$$
Here $k \in \mathbb{Z}_{\geq 3}$ and $Q_{3}, Q_{4}$ are some positive constants. Let $q$ such that $1<q<l$. We have
$$
x(k) \leq \frac{Q_{4} k^{2}}{l^{k}}=\left(\frac{q}{l}\right)^{k} \cdot \frac{Q_{4} k^{2}}{q^{k}} \leq Q_{5}\left(\frac{q}{l}\right)^{k},
$$
for some constant $Q_{5}$. The above inequality implies that the sequence $\{w(k)\}$ converges linearly to $w^*$.
\end{proof}
\subsection{On Seclection of Parameter Values}
We now turn to the selection of parameters that satisfy the assumptions required by Theorem \ref{dl4.2}. It is important to observe that if $B_{0}, B_{1}$, and $D_{0}$ fulfill the condition:
\begin{equation}\label{pt4.22}
	B_{0}, B_{1}, D_{0} > 0,
\end{equation}
then the constraints \eqref{pt4.16}--\eqref{pt4.21} are satisfied in the limit as $\xi \to 0^{+}$. 

The following result provides a more tractable characterization of \eqref{pt4.22} in terms of the algebraic coefficients $a_{0}, a_{1}$, and $a_{2}$.

\begin{cor}\label{hq4.3} 
	 Suppose that the operators $h, g, F$  satisfy Assumption \ref{a3.1}. Let $w^*$ be the unique solution of Problem \eqref{gimvip}. Denote the parameters as in \eqref{dna1}, \eqref{dna}, \eqref{pt4.4} and \eqref{pt4.5}.  Then $w(k)$ converges linearly to $w^*$ provided that coefficients $a_{0}, a_{1}, a_{2}, \gamma$ satisfy

	\begin{align*}
		& a_{2}<2 \\
		& \max \left\{0, a_{2}-1\right\}<a_{1}<\frac{a_{2}^{2}}{a_{2}+2} \\
		& a_{0}<c_1 \cdot \min \left\{\frac{a_{1}^{2}}{a_{1}+2 a_{2}}, 1-a_{2}+a_{1}\right\} 
	\end{align*}
\end{cor}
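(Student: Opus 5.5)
The plan is to mirror the proof of Corollary~\ref{hq3.5}. First I will show that the three stated inequalities on $a_0,a_1,a_2$ give $B_0>0$, $B_1>0$ and $D_0>0$, and also the parts of Assumption~\ref{a4.1} that the proof of Theorem~\ref{dl4.2} uses. Then I will use the observation after Theorem~\ref{dl4.2}: when \eqref{pt4.22} holds, conditions \eqref{pt4.7}--\eqref{pt4.12} (equivalently \eqref{pt4.16}--\eqref{pt4.21}) hold for all sufficiently small $\xi>0$. Fixing such a $\xi\neq 1$, Theorem~\ref{dl4.2} gives linear convergence of $w(k)$.

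\textbf{Positivity step.} Write $\tau:=c_1/a_0$; the bound on $a_0$ gives $\tau>\max\{(a_1+2a_2)/a_1^2,\;1/(1-a_2+a_1)\}$.
\begin{itemize}
\item[(i)] $B_0=\tau a_1^2-2a_2-a_1>0$ is exactly the first part of the $\min$ in the bound on $a_0$.
\item[(ii)] $E_0=\tau(1-a_2+a_1)-1>0$, i.e.\ \eqref{pt4.1}, is the second part. This needs $1-a_2+a_1>0$, which follows from $a_1>a_2-1$.
\item[(iii)] For $B_1=\tau a_1a_2-2a_2-3$, I will use $\tau a_1a_2>a_2(a_1+2a_2)/a_1$. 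It then suffices to have $a_2(a_1+2a_2)\ge (2a_2+3)a_1$, i.e.\ $2a_2^2\ge a_1a_2+3a_1$. The hypothesis $a_1<a_2^2/(a_2+2)$ looks designed for a comparison of exactly this type, so I will try to close the estimate with it, possibly also using the second lower bound on $\tau$.
\item[(iv)] For $D_0=\tau(a_2^2-2a_1-a_1a_2)+a_2+3$, the hypothesis $a_1<a_2^2/(a_2+2)$ is equivalent to $a_2^2-a_1a_2-2a_1>0$. Together with $\tau>0$ and $a_2+3>0$, this gives $D_0>0$ immediately.
\item[(v)] For \eqref{pt4.2} and \eqref{pt4.3} (the positivity of $D_1$ and $B_2$ used in the summation steps), I will use $a_2<2$ together with the bounds on $\tau$, and check them in the same way.
\end{itemize}

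\textbf{Small-$\xi$ step.} At $\xi=0$ (so $l=1$), \eqref{pt4.16}--\eqref{pt4.21} reduce to $c_1c^2a_0\ge0$, $B_0\ge0$, $a_1\ge0$, $B_1\ge 0$, $D_0\ge0$ and $a_2>0$. All of these hold with strict inequality under the positivity just established. Each condition depends continuously on $\xi$, so strictness is preserved on some interval $(0,\xi_0)$. I will take $\xi\in(0,\min\{\xi_0,1\})$.

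\textbf{Main obstacle.} I expect the hardest part to be the inequality for $B_1$ in (iii). It is the one place where the hypotheses have to be combined rather than read off directly, and it is the most likely place for the stated bounds to be insufficient. If the route in (iii) fails, I will try instead to bound $\tau$ below using $1/(1-a_2+a_1)$ and exploit $a_2<2$. Failing that, I would note that an extra explicit condition is required.
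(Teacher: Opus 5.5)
Your plan is the paper's own argument: derive $B_0,B_1,D_0>0$ and \eqref{pt4.1}--\eqref{pt4.3} from the three hypotheses, then take $\xi$ small and apply Theorem~\ref{dl4.2}. The step you flag as the main obstacle is not one, and no extra condition is needed. Your reduction of $B_1>0$ to $a_1(a_2+3)\le 2a_2^2$ is exactly the paper's. It follows from the hypothesis on $a_1$ alone, since $a_1<\frac{a_2^2}{a_2+2}<\frac{2a_2^2}{a_2+3}$, and the last inequality is equivalent to $a_2>-1$. So neither the second lower bound on $\tau$ nor $a_2<2$ is needed there.

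Item (v) also closes in one line. For $a_2>0$, the condition $a_2<2$ is equivalent to $\frac{a_2^2}{a_2+2}<\frac{a_2}{2}$. Hence $2a_1<a_2$, and the left side of \eqref{pt4.2} is negative. This is the only place $a_2<2$ is used. Next, $a_1<a_2$ gives $\tau a_1>\frac{a_1+2a_2}{a_1}=1+\frac{2a_2}{a_1}>3$, which is \eqref{pt4.3}.
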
 
\begin{proof}
 Since $a_{1}<\frac{a_{2}^{2}}{a_{2}+2}$, we have $D_{0}>a_{2}+3>0$. Also using $a_{1}<\frac{a_{2}^{2}}{a_{2}+2}$ and the fact that $a_{2}<2$, we get

\begin{equation}\label{pt4.26} 
a_{1}<\frac{a_{2}^{2}}{a_{2}+2}<\frac{a_{2}}{2}, 
\end{equation} 
which gives \eqref{pt4.2}. It follows from \eqref{pt4.26}  that $a_{1}<a_{2}$ and so 
$$
\frac{c_1 a_{1}}{3}>\frac{c_1 a_{1}^{2}}{a_{1}+2 a_{2}}>a_{0}
$$ 
The last inequality proves \eqref{pt4.3}. Thus, Assumption \ref{a4.1} holds. Note that 
$$
a_{1}<\frac{a_{2}^{2}}{a_{2}+2}<\frac{2 a_{2}^{2}}{a_{2}+3},
$$ 
which gives 
$$
\frac{c_1 a_{1} a_{2}}{2 a_{2}+3}>\frac{c_1 a_{1}^{2}}{a_{1}+2 a_{2}}>a_{0}
$$ 
and then $B_{1}>0$.
\end{proof}
\begin{remark} 	 

It is worth noting that there exist common parameter selections satisfying the conditions of both Corollary \ref{hq3.5} (in the limit $\varepsilon \to 0$) and Corollary \ref{hq4.3} (as $\xi \to 0$). For instance, one may verify that the following selection is sufficient:
\begin{equation*}
	\begin{aligned}
		& a_{2} < 1, \\
		& a_{1} < \frac{a_{2}^{2}}{a_{2}+2}, \\
		& a_{0} < c_{1} \cdot \min \left\{ \frac{1}{3} a_{1} a_{2}, \frac{a_{1}^{2}}{a_{1} + 2 a_{2}} \right\}.
	\end{aligned}
\end{equation*}
\end{remark}
\begin{remark}
Since the generalized inverse mixed variational inequality problem (GIMVIP) \eqref{gimvip} encompasses several variational problems as special cases, one can deduce specific results for corresponding problems by choosing appropriate operators or functions. 

Specifically, if $g$ is the identity operator, the GIMVIP \eqref{gimvip} reduces to an inverse mixed variational inequality problem. In this case, the dynamical system \eqref{pt2.1} simplifies to
\begin{equation}\label{pt2.1n} 
	w^{(3)}(t)+a_{2} w^{(2)}(t)+a_{1} w^{(1)}(t)+a_{0}\left[F(w(t)) - P_{\Omega}^{\gamma h}(F(w(t)) - \gamma w(t))\right]=0.
\end{equation}
Moreover, if $h \equiv 0$, then $P_{\Omega}^{\gamma h}$ reduces to the projection operator $P_{\Omega}$, and equation \eqref{pt2.1n} becomes
\begin{equation*}
	w^{(3)}(t)+a_{2} w^{(2)}(t)+a_{1} w^{(1)}(t)+a_{0}\left[F(w(t)) - P_{\Omega}(F(w(t)) - \gamma w(t))\right]=0.
\end{equation*}

When $F$ is the identity operator, the GIMVIP \eqref{gimvip} recovers a mixed variational inequality problem, and equation \eqref{pt2.1} collapses to
\begin{equation*}\label{pt2.1n3} 
	w^{(3)}(t)+a_{2} w^{(2)}(t)+a_{1} w^{(1)}(t)+a_{0}\left[w(t) - P_{\Omega}^{\gamma h}(w(t) - \gamma g(w(t)))\right]=0.
\end{equation*}
In addition, if $h \equiv 0$, the model further simplifies to a classical variational inequality problem:
\begin{equation*}\label{pt2.1n4} 
	w^{(3)}(t)+a_{2} w^{(2)}(t)+a_{1} w^{(1)}(t)+a_{0}\left[w(t) - P_{\Omega}(w(t) - \gamma g(w(t)))\right]=0,
\end{equation*}
which was studied in \cite{Hai-Vuong}.
\end{remark}

\end{document}